\def\multiset#1#2{\ensuremath{\left(\kern-.3em\left(\genfrac{}{}{0pt}{}{#1}{#2}\right)\kern-.3em\right)}}
\def\P{\mathop{\mbox{\textup{P}}}\nolimits}
\newcommand{\N}{\mathbb{N}}
\newcommand{\R}{\mathbb{R}}
\newcommand{\T}{\mathrm{T}}
\newcommand{\LL}{\mathrm{L}}
\newcommand{\F}{\mathrm{F}}
\newcommand{\J}{\mathrm{J}}
\newcommand{\M}{\mathrm{M}}
\newcommand{\U}{\mathrm{U}}
\newcommand{\fibonomial}{\genfrac{\{}{\}}{0pt}{}}
\newcommand{\qbinom}{\genfrac{[}{]}{0pt}{}}
\newtheorem{theorem}{Theorem}
\newtheorem{definition}{Definition}
\newtheorem{corollary}{Corollary}
\begin{document}

\title{\textbf{Simplicial $d$-Polytopic Numbers Defined on Generalized Fibonacci Polynomials}}
\author{Ronald Orozco L\'opez}
\newcommand{\Addresses}{{
  \bigskip
  \footnotesize

  \textit{E-mail address}, R.~Orozco: \texttt{rj.orozco@uniandes.edu.co}
  
}}

\maketitle

\begin{abstract}
In this article, we introduce the simplicial $d$-polytopic numbers defined on generalized Fibonacci polynomials. We establish basic identities and find $q$-identities known. Furthermore, we find generating functions for the simplicial $d$-polytopic numbers and for the squares of the generalized triangular numbers. Finally, we compute sums of reciprocals of generalized Fibonacci polynomials and generalized triangular numbers. Here we introduce the Zeta function defined on generalized Fibonacci polynomials. 
\end{abstract}
\noindent 2020 {\it Mathematics Subject Classification}:
Primary 11B39. Secondary 05A15; 11B65.

\noindent \emph{Keywords: } Simplicial $d$-polytopic numbers, generalized Fibonacci polynomials, generalized triangular numbers, generalized tetrahedral numbers, sum of reciprocals.

\section{Introduction}
There is a growing research on analogous sequences of numbers defined on generalized Fibonacci polynomials: Catalan numbers \cite{bennet,ekhad}, Bernoulli and Euler polynomials \cite{kus}, Eulerian numbers \cite{ruiz}, among others. In this paper, we define simplicial $d$-polytopic numbers on generalized Fibonacci polynomials. The advantage of doing so is that we obtain for free analogues on Fibonacci sequences, Pell, Jacobthal, Mersenne, among others.

The simplicial polytopic numbers \cite{deza} are a family of sequences of figurate numbers corresponding to the $d$-dimensional simplex for each dimension $d$, where $d$ is a non-negative integer. For $d$ ranging from $1$ to $5$, we have the following simplicial polytopic numbers, respectively: non-negative numbers $\N$, triangular numbers $\mathrm{T}$, tetrahedral numbers $\mathrm{Te}$, pentachoron numbers $\mathrm{P}$ and hexateron numbers $\mathrm{H}$. A list of the above sets of numbers is as follows:
\begin{align*}
    \N&=(0,1,2,3,4,5,6,7,8,9,\ldots),\\
    \mathrm{T}&=(0,1,3,6,10,15,21,28,36,45,55,66,\ldots),\\
    \mathrm{Te}&=(0,1,4,10,20,35,56,84,120,165,\ldots),\\
    \mathrm{P}&=(0,1,5,15,35,70,126,210,330,495,715,\ldots),\\
    \mathrm{H}&=(0,1,6,21,56,126,252,462,792,1287,\ldots).
\end{align*}
The $n^{th}$ simplicial $d$-polytopic numbers $\P_{n}^{d}$ are given by the formulae
\begin{equation*}
    \P_{n}^{d}=\binom{n+d-1}{d}=\frac{n^{(d)}}{d!},
\end{equation*}
where $x^{(d)}=x(x+1)(x+2)\cdots(x+d-1)$ is the rising factorial.
The generating function of the simplicial $d$-polytopic numbers is
\begin{equation}\label{eqn_gf_spn}
    \sum_{n=1}^{\infty}\binom{n+d-1}{d}x^{n}=\frac{x}{(1-x)^{d+1}}.
\end{equation}
In this paper, the $n$-th generalized simplicial $d$-polytopic number is defined by
    \begin{equation*}
        \fibonomial{n+d-1}{d}_{s,t}=\frac{\brk[c]{n}_{s,t}\brk[c]{n+1}_{s,t}\cdots\brk[c]{n+d-1}_{s,t}}{\brk[c]{d}_{s,t}!},
    \end{equation*}
where $\{n\}_{s,t}$ are the generalized Fibonacci polynomials. We will establish basic identities for generalized simplicial $d$-polytopic numbers, in particular for generalized triangular and tetrahedral numbers. These sequences are part of the Encyclopedia of Integer Sequences \cite{sloane}. Some known $q$-identities are found, \cite{sch,war}. We establish generating functions for the generalized simplicial $d$-polytopic numbers $\fibonomial{n+d-1}{d}_{s,t}$ and for the sequence $\fibonomial{n+1}{2}_{s,t}^2$. Finally, we introduce the $(s,t)$-Zeta function $\zeta_{s,t}(z)$ and find some values for $\zeta_{s,t}(1)$. In addition, we calculate reciprocal sums of generalized Fibonacci polynomials and generalized triangular numbers.

\section{Preliminaries}

The generalized Fibonacci numbers on the parameters $s,t$ are defined by
\begin{equation}\label{eqn_def_fibo}
    \brk[c]{n+2}_{s,t}=s\{n+1\}_{s,t}+t\{n\}_{s,t}
\end{equation}
with initial values $\brk[c]{0}_{s,t}=0$ and $\brk[c]{1}_{s,t}=1$, where $s\neq0$ and $t\neq0$. In \cite{Sa1} this sequence is called the generalized Lucas sequence.
Below are some important specializations of generalized Fibonacci and Lucas numbers.
\begin{enumerate}
    \item When $s=2,t=-1$, then $\brk[c]{n}_{2,-1}=n$, the positive integer.
    \item When $s=1,t=1$, then $\brk[c]{n}_{1,1}=F_n$, the Fibonacci numbers
    \begin{equation*}
        \F_{n}=(0,1,1,2,3,5,8,13,21,34,55,\ldots).
    \end{equation*}
    \item When $s=2,t=1$, then $\brk[c]{n}_{2,1}=P_n$, where $P_n$ are the Pell numbers
    \begin{equation*}
        \P_n=(0,1,2,5,12,29,70,169,408\ldots).  
    \end{equation*}
    \item When $s=1,t=2$, then $\brk[c]{n}_{1,2}=J_n$, where $J_n$ are the Jacosbthal numbers
    \begin{equation*}
        \J_n=(0,1,1,3,5,11,21,43,85,171,\ldots).
    \end{equation*}
    \item When $s=3,t=-2$, then $\brk[c]{n}_{3,-2}=M_n$, where $M_n=2^n-1$ are the Mersenne numbers
    \begin{equation*}
        \M_n=(0,1,3,7,15,31,63,127,255,\ldots).
    \end{equation*}
    \item When $s=p+q,t=-pq$, then $\brk[c]{n}_{p+q,-pq}=\brk[s]{n}_{p,q}$, where $\brk[s]{n}_{p,q}$ are the $(p,q)$-numbers
    \begin{equation*}
        \brk[s]{n}_{p,q}=(0,1,\brk[s]{2}_{p,q},[3]_{p,q},[4]_{p,q},[5]_{p,q},[6]_{p,q},[7]_{p,q},[8]_{p,q}\ldots).
    \end{equation*}
    When $p=1$, we obtain the $q$-numbers $\brk[s]{n}_{q}=1+q+q^2+q^3+\cdots+q^{n-1}$.
    \item When $s=2t,t=-1$, then $\brk[c]{n}_{2t,-1}=\U_{n-1}(t)$, where $\U_n(t)$ are the Chebysheff polynomials of the second kind, with $\U_{-1}(t)=0$.
\end{enumerate}
The $(s,t)$-Fibonacci constant is the ratio toward which adjacent $(s,t)$-Fibonacci polynomials tend. This is the only positive root of $x^{2}-sx-t=0$. We will let $\varphi_{s,t}$ denote this constant, where
\begin{equation*}
    \varphi_{s,t}=\frac{s+\sqrt{s^{2}+4t}}{2}
\end{equation*}
and
\begin{equation*}
    \varphi_{s,t}^{\prime}=s-\varphi_{s,t}=-\frac{t}{\varphi_{s,t}}=\frac{s-\sqrt{s^{2}+4t}}{2}.
\end{equation*}
Some specializations of the constants $\varphi_{s,t}$ and $\varphi_{s,t}^\prime$ are:
\begin{enumerate}
    \item When $s=2$ and $t=-1$, then $\varphi_{2,-1}=1$ and $\varphi_{2,-1}^\prime=1$.
    \item When $s=1$ and $t=1$, then $\varphi_{1,1}=\varphi=\frac{1+\sqrt{5}}{2}$ and $\varphi_{1,1}^\prime=\varphi^\prime=\frac{1-\sqrt{5}}{2}$.
    \item When $s=2$ and $t=1$, then $\varphi_{2,1}=1+\sqrt{2}$ and $\varphi_{2,1}^\prime=1-\sqrt{2}$.
    \item When $s=1$ and $t=2$, then $\varphi_{1,2}=2$ and $\varphi_{1,2}^\prime=-1$.
    \item When $s=3$ and $t=-2$, then $\varphi_{3,-2}=2$ and $\varphi_{3,-2}^\prime=1$.
    \item When $s=p+q$ and $t=-pq$, then $\varphi_{p+q,-pq}=p$ and $\varphi_{p+q,-pq}^\prime=q$.
    \item When $s=2t$ and $t=-1$, then $\varphi_{2t,-1}=\frac{t+\sqrt{t^2-1}}{2}$ and $\varphi_{2t,-1}^\prime=\frac{t-\sqrt{t^2-1}}{2}$.
\end{enumerate}

The $(s,t)$-fibonomial coefficients are define by
\begin{equation*}
    \fibonomial{n}{k}_{s,t}=\frac{\brk[c]{n}_{s,t}!}{\brk[c]{k}_{s,t}!\brk[c]{n-k}_{s,t}!},
\end{equation*}
where $\brk[c]{n}_{s,t}!=\brk[c]{1}_{s,t}\brk[c]{2}_{s,t}\cdots\brk[c]{n}_{s,t}$ is the $(s,t)$-factorial or generalized fibotorial.

The $(s,t)$-fibonomial coefficients satisfy the following Pascal recurrence relationships. For $1\leq k\leq n-1$ it holds that
\begin{align}
    \fibonomial{n+1}{k}_{s,t}&=\varphi_{s,t}^{k}\fibonomial{n}{k}_{s,t}+\varphi_{s,t}^{\prime(n+1-k)}\fibonomial{n}{k-1}_{s,t},\label{eqn_pascal1}\\
    &=\varphi_{s,t}^{\prime(k)}\fibonomial{n}{k}_{s,t}+\varphi_{s,t}^{n+1-k}\fibonomial{n}{k-1}_{s,t}.\label{eqn_pascal2}
\end{align}

Set $s,t\in\R$, $s\neq0$, $t\neq0$. If $s^2+4t\neq0$, define the $(\varphi,\varphi^\prime)$-derivative $D_{\varphi,\varphi^\prime}$ of the function $f(x)$ as
\begin{equation}
(D_{\varphi,\varphi^\prime}f)(x)=
\begin{cases}
\frac{f(\varphi_{s,t}x)-f(\varphi_{s,t}^{\prime}x)}{(\varphi_{s,t}-\varphi_{s,t}^{\prime})x},&\text{ if }x\neq0;\\
f^{\prime}(0),&\text{ if }x=0
\end{cases}
\end{equation}
provided $f(x)$ differentiable at $x=0$. If $s^2+4t=0$, $t<0$, define the $(\pm i\sqrt{t},\pm i\sqrt{t})$-derivative of the function $f(x)$ as
\begin{equation}
    (D_{\pm i\sqrt{t},\pm i\sqrt{t}}f)(x)=f^{\prime}(\pm i\sqrt{t}x).
\end{equation}
The $(\varphi,\varphi^\prime)$-derivative $D_{\varphi,\varphi^\prime}$ fulfills the following properties: linearity
        \begin{align*}
            D_{\varphi,\varphi^\prime}(\alpha f+\beta g)&=\alpha D_{\varphi,\varphi^\prime}f+\beta D_{\varphi,\varphi^\prime}g,
        \end{align*}
product rules
        \begin{equation*}
            D_{\varphi,\varphi^\prime}(f(x)g(x))=f(\varphi_{s,t}x)D_{\varphi,\varphi^\prime}g(x)+g(\varphi_{s,t}^\prime x)D_{\varphi,\varphi^\prime}f(x),
        \end{equation*}
         \begin{equation*}
             D_{\varphi,\varphi^\prime}(f(x)g(x))=f(\varphi_{s,t}^{\prime}x)D_{\varphi,\varphi^\prime}g(x)+g(\varphi_{s,t}x)D_{\varphi,\varphi^\prime}f(x),
         \end{equation*}
and quotient rules
         \begin{equation*}
             D_{\varphi,\varphi^\prime}\left(\frac{f(x)}{g(x)}\right)=\frac{g(\varphi_{s,t}x)D_{\varphi,\varphi^\prime}f(x)-f(\varphi_{s,t}x)D_{\varphi,\varphi^\prime}g(x)}{g(\varphi_{s,t}x)g(\varphi_{s,t}^{\prime}x)},
         \end{equation*}
         \begin{equation*}
             D_{\varphi,\varphi^\prime}\left(\frac{f(x)}{g(x)}\right)=\frac{g(\varphi_{s,t}^{\prime}x)D_{\varphi,\varphi^\prime}f(x)-f(\varphi_{s,t}^{\prime}x)D_{\varphi,\varphi^\prime}g(x)}{g(\varphi_{s,t}x)g(\varphi_{s,t}^{\prime}x)}.
         \end{equation*}
Define the $n$-th $(\varphi,\varphi^\prime)$-derivative of the function $f(x)$ recursively as
\begin{equation*}
    D_{\varphi,\varphi^\prime}^nf(x)=D_{\varphi,\varphi^\prime}(D_{\varphi,\varphi^\prime}^{n}f(x)).
\end{equation*}

\section{Generalized simplicial d-polytopic numbers}

\subsection{Definition and basic properties}
\begin{definition}
The $n$-th generalized simplicial $d$-polytopic number is defined by
    \begin{equation}
        \fibonomial{n+d-1}{d}_{s,t}=\frac{\brk[c]{n}_{s,t}\brk[c]{n+1}_{s,t}\cdots\brk[c]{n+d-1}_{s,t}}{\brk[c]{d}_{s,t}!}.
    \end{equation}
The $(s,t)$-analog of the triangular numbers, tetrahedral numbers, pentachoron numbers and hexateron numbers are:
\begin{align*}
    \mathrm{T}_{s,t}&=\Bigg\{\mathrm{T}_{n}(s,t)=\fibonomial{n+1}{2}_{s,t}:n\geq0\Bigg\},\\
    \mathrm{Te}_{s,t}&=\Bigg\{\mathrm{Te}_{n}(s,t)=\fibonomial{n+2}{3}_{s,t}:n\geq0\Bigg\},\\
    \mathrm{P}_{s,t}&=\Bigg\{\mathrm{P}_{n}(s,t)=\fibonomial{n+3}{4}_{s,t}:n\geq0\Bigg\},\\
    \mathrm{H}_{s,t}&=\Bigg\{\mathrm{H}_{n}(s,t)=\fibonomial{n+4}{5}_{s,t}:n\geq0\Bigg\}.
\end{align*}
\end{definition}
From Pascal recurrence Eqs(\ref{eqn_pascal1}) and (\ref{eqn_pascal2})
\begin{align}
    \fibonomial{n+d}{d}_{s,t}&=\varphi_{s,t}^{d}\fibonomial{n+d-1}{d}_{s,t}+\varphi_{s,t}^{\prime n}\fibonomial{n+d-1}{d-1}_{s,t},\label{eqn_gspn_recu1}\\
    &=\varphi_{s,t}^{\prime d}\fibonomial{n+d-1}{d}_{s,t}+\varphi_{s,t}^{n}\fibonomial{n-d-1}{d-1}_{s,t}\label{eqn_gspn_recu2}.
\end{align}
It is a well-known fact that the sum of the first $n$ terms of a sequence of $d$-polytopic numbers is the $n$-th term of a sequence of $(d+1)$-polytopic numbers, i.e,
\begin{equation}
    \sum_{k=1}^{n}\P_{n}^d=\P_{n}^{d+1}.
\end{equation}
We then obtain the $(s,t)$-analog of the above formula.
\begin{theorem}\label{theo_reduc}
For all $n\geq1$, the sum of generalized $d$-polytopic numbers is
    \begin{align}
        \fibonomial{n+d}{d+1}_{s,t}&=\sum_{k=1}^{n}\varphi_{s,t}^{(d+1)(n-k)}\varphi_{s,t}^{\prime(k-1)}\fibonomial{k+d-1}{d}_{s,t},\\
        \fibonomial{n+d}{d+1}_{s,t}&=\sum_{k=1}^{n}\varphi_{s,t}^{\prime(d+1)(n-k)}\varphi_{s,t}^{k-1}\fibonomial{k+d-1}{d}_{s,t}.
    \end{align}
\end{theorem}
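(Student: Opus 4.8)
The plan is to prove both identities by induction on $n$, using the one-step recurrences in Eqs.~(\ref{eqn_gspn_recu1}) and (\ref{eqn_gspn_recu2}) as the engine. For the first identity I would begin by specializing (\ref{eqn_gspn_recu1}): replacing $d$ with $d+1$ and then shifting $n \mapsto n-1$ gives the first-order linear recurrence
\[
    \fibonomial{n+d}{d+1}_{s,t} = \varphi_{s,t}^{d+1}\fibonomial{n+d-1}{d+1}_{s,t} + \varphi_{s,t}^{\prime(n-1)}\fibonomial{n+d-1}{d}_{s,t}
\]
for the quantity $S_n := \fibonomial{n+d}{d+1}_{s,t}$, whose inhomogeneous term is exactly a generalized $d$-polytopic number $\fibonomial{n+d-1}{d}_{s,t}$.

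The base case $n=1$ reads $S_1 = \fibonomial{d+1}{d+1}_{s,t}$; since the numerator $\brk[c]{1}_{s,t}\brk[c]{2}_{s,t}\cdots\brk[c]{d+1}_{s,t}$ equals $\brk[c]{d+1}_{s,t}!$, this is $1$, matching the single summand $\varphi_{s,t}^{0}\varphi_{s,t}^{\prime0}\fibonomial{d}{d}_{s,t}=1$ on the right. For the inductive step I would insert the assumed formula for $S_{n-1}$ into the recurrence. Multiplying the sum by $\varphi_{s,t}^{d+1}$ promotes each exponent $(d+1)(n-1-k)$ to $(d+1)(n-k)$, while the free term $\varphi_{s,t}^{\prime(n-1)}\fibonomial{n+d-1}{d}_{s,t}$ is precisely the missing $k=n$ summand, for which $\varphi_{s,t}^{(d+1)(n-n)}=1$. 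Absorbing it extends the range of summation from $k=n-1$ to $k=n$, which is the claimed closed form.

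The second identity is handled identically, now starting from (\ref{eqn_gspn_recu2}): the same substitution $d \mapsto d+1$, $n \mapsto n-1$ produces $S_n = \varphi_{s,t}^{\prime(d+1)}S_{n-1} + \varphi_{s,t}^{n-1}\fibonomial{n+d-1}{d}_{s,t}$, and the induction runs verbatim with the roles of $\varphi_{s,t}$ and $\varphi_{s,t}^{\prime}$ exchanged. I expect no deep obstacle here; the only real care is the exponent bookkeeping, confirming that multiplying through by $\varphi_{s,t}^{d+1}$ (resp.\ $\varphi_{s,t}^{\prime(d+1)}$) shifts the exponents correctly and that the free term slots in as the top index of the sum. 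I would also double-check the index on the second fibonomial in (\ref{eqn_gspn_recu2}), which as printed reads $\fibonomial{n-d-1}{d-1}_{s,t}$ but must be $\fibonomial{n+d-1}{d-1}_{s,t}$ to be consistent with (\ref{eqn_pascal2}).
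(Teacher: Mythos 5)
Your proof is correct and follows essentially the same route as the paper: induction on $n$ driven by the Pascal recurrence (\ref{eqn_gspn_recu1}) (resp.\ (\ref{eqn_gspn_recu2}) with $\varphi_{s,t}$ and $\varphi_{s,t}^{\prime}$ exchanged) specialized at $d\mapsto d+1$, with the inhomogeneous term absorbed as the top summand $k=n$. Your side remark is also right: the index $\fibonomial{n-d-1}{d-1}_{s,t}$ in (\ref{eqn_gspn_recu2}) is a typo for $\fibonomial{n+d-1}{d-1}_{s,t}$, as consistency with (\ref{eqn_pascal2}) requires.
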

\begin{proof}
The proof is by induction on $n$. When $n=1$, $\fibonomial{d+1}{d+1}_{s,t}=\fibonomial{d}{d}_{s,t}$. For $n=2$, it follows that
\begin{align*}
    \fibonomial{d+2}{d+1}_{s,t}&=\varphi_{s,t}^{d+1}\fibonomial{d}{d}_{s,t}+\varphi_{s,t}^\prime\fibonomial{d+1}{d}_{s,t}.
\end{align*}
Suppose the statement is true for $n$ and let's prove it for $n+1$. We have
\begin{align*}
    \fibonomial{n+d+1}{d+1}_{s,t}&=\varphi_{s,t}^{d+1}\fibonomial{n+d}{d+1}_{s,t}+\varphi_{s,t}^{\prime n}\fibonomial{n+d}{d}_{s,t}\\
    &=\varphi_{s,t}^{d+1}\sum_{k=1}^{n}\varphi_{s,t}^{(d+1)(n-k)}\varphi_{s,t}^{\prime(k-1)}\fibonomial{k+d-1}{d}_{s,t}+\varphi_{s,t}^{\prime n}\fibonomial{n+d}{d}_{s,t}\\
    &=\sum_{k=1}^{n}\varphi_{s,t}^{(d+1)(n+1-k)}\varphi_{s,t}^{\prime(k-1)}\fibonomial{k+d-1}{d}_{s,t}+\varphi_{s,t}^{\prime n}\fibonomial{n+d}{d}_{s,t}\\
    &=\sum_{k=1}^{n+1}\varphi_{s,t}^{(d+1)(n+1-k)}\varphi_{s,t}^{\prime(k-1)}\fibonomial{k+d-1}{d}_{s,t}.
\end{align*}
The proof is reached.
\end{proof}
From Theorem \ref{theo_reduc} we obtain the following known result about $q$-binomial coefficients:
\begin{equation*}
    \qbinom{n+d}{d+1}_{q}=\sum_{k=1}^{n}q^{k-1}\qbinom{k+d-1}{d}_{q}=\sum_{k=1}^{n}q^{(d+1)(n-k)}\qbinom{k+d-1}{d}_{q}.
\end{equation*}

\section{Some specializations}

\subsection{Generalized triangular numbers}

Some specializations of generalized triangular numbers are
\begin{align*}
    \fibonomial{n+1}{2}_{1,1}&=F_{n}F_{n+1}\\
    &=(0,1,2,6,15,40,104,273,\ldots),\\
    \fibonomial{n+1}{2}_{2,1}&=\frac{1}{2}P_{n}P_{n+1}\\
    &=(0,1,5,30,174,1015,5915,\ldots),\\
    \fibonomial{n+1}{2}_{1,2}&=J_{n}J_{n+1}\\
    &=(0,1,2,6,15,55,231,903,3655,\ldots),\\
    \fibonomial{n+1}{2}_{3,-2}&=\frac{1}{3}(2^n-1)(2^{n+1}-1)\\
    &=(0,1,7,35,155,651,2667,10795,43435,174251,\ldots).
\end{align*}
The $\fibonomial{n+1}{2}_{1,1}$ numbers are known as Golden rectangle numbers, A001654. The $\fibonomial{n+1}{2}_{2,1}$ numbers may be called Pell triangles, A084158. The $\fibonomial{n+1}{2}_{1,2}$ numbers are known as Jacobsthal oblong numbers, A084175. The $\fibonomial{n+1}{2}_{3,-2}$ numbers are the Gaussian binomial coefficients $\qbinom{n}{2}_{q}$ for $q=2$, A006095.

From Eqs. (\ref{eqn_gspn_recu1}) and (\ref{eqn_gspn_recu2}), and Theorem \ref{theo_reduc},
\begin{align*}
    \fibonomial{n+2}{2}_{s,t}&=\varphi_{s,t}^2\fibonomial{n+1}{2}+\varphi_{s,t}^{\prime n}\brk[c]{n+1}_{s,t},\\
    \fibonomial{n+2}{2}_{s,t}&=\varphi_{s,t}^{\prime2}\fibonomial{n+1}{2}_{s,t}+\varphi_{s,t}^{n}\brk[c]{n+1}_{s,t},\\
    \fibonomial{n+1}{2}_{s,t}&=\sum_{k=1}^{n}\varphi_{s,t}^{2(n-k)}\varphi_{s,t}^{\prime(k-1)}\brk[c]{k}_{s,t},\\
    \fibonomial{n+1}{2}_{s,t}&=\sum_{k=1}^{n}\varphi_{s,t}^{\prime2(n-k)}\varphi_{s,t}^{k-1}\brk[c]{k}_{s,t}.
\end{align*}
From the above identities we obtain the identity of Warnaar \cite{war}
\begin{equation*}
    \qbinom{n+1}{2}_{q}=\sum_{k=1}^{n}\frac{1-q^k}{1-q}q^{2(n-k)}=\sum_{k=1}^{n}q^{k-1}\frac{1-q^k}{1-q}.
\end{equation*}

\begin{theorem}\label{eqn_tri_sq}
For all $n\in\N$,
\begin{enumerate}
    \item The $(s,t)$-analog of the identity $\T_{n+1}-\T_{n}=(n+1)^2$ is
    \begin{equation*}
        \fibonomial{n+2}{2}_{s,t}=t\fibonomial{n+1}{2}_{s,t}+\brk[c]{n+1}^2.
    \end{equation*}
    \item  The $(s,t)$-analog of the identity $\T_{n-1}+\T_{n}=n^2$ is
    \begin{equation*}
        \varphi_{s,t}^{n+1}\fibonomial{n}{2}_{s,t}+\varphi_{s,t}^{\prime(n-1)}\fibonomial{n+1}{2}_{s,t}=\frac{\brk[a]{n}_{s,t}}{\brk[c]{2}_{s,t}}\brk[c]{n}_{s,t}^2,
    \end{equation*}
    where $\brk[a]{n}_{s,t}=\varphi_{s,t}^n+\varphi_{s,t}^{\prime n}$.
    \item The $(s,t)$-analogue of the alternating sum squares $\T_{n}=\sum_{k=1}^{n}(-1)^{n-k}k^2$ is
    \begin{equation*}
        \fibonomial{n+1}{2}_{s,t}=\sum_{k=1}^{n}t^{n-k}\brk[c]{k}_{s,t}^2.
    \end{equation*}
\end{enumerate}
\end{theorem}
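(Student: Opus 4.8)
The plan is to reduce all three statements to three elementary facts: the closed form $\fibonomial{n+1}{2}_{s,t}=\brk[c]{n}_{s,t}\brk[c]{n+1}_{s,t}/\brk[c]{2}_{s,t}$ (the case $d=2$ of the Definition, where $\brk[c]{2}_{s,t}=s$), the defining recurrence~(\ref{eqn_def_fibo}), and the Binet representation $\brk[c]{n}_{s,t}=(\varphi_{s,t}^{n}-\varphi_{s,t}^{\prime n})/(\varphi_{s,t}-\varphi_{s,t}^{\prime})$, together with $\varphi_{s,t}\varphi_{s,t}^{\prime}=-t$ and $\varphi_{s,t}+\varphi_{s,t}^{\prime}=s$.

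For part (1) I would avoid induction entirely: multiply the recurrence $\brk[c]{n+2}_{s,t}=s\brk[c]{n+1}_{s,t}+t\brk[c]{n}_{s,t}$ through by $\brk[c]{n+1}_{s,t}/\brk[c]{2}_{s,t}$. Since $\brk[c]{2}_{s,t}=s$, the left-hand side is exactly $\fibonomial{n+2}{2}_{s,t}$, the term $s\,\brk[c]{n+1}_{s,t}\,\brk[c]{n+1}_{s,t}/\brk[c]{2}_{s,t}$ collapses to $\brk[c]{n+1}_{s,t}^{2}$, and the term $t\,\brk[c]{n}_{s,t}\brk[c]{n+1}_{s,t}/\brk[c]{2}_{s,t}$ becomes $t\fibonomial{n+1}{2}_{s,t}$, which is the claim. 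Part (3) then follows by a one-line induction on $n$ built directly on part (1): the base case $\fibonomial{2}{2}_{s,t}=1=\brk[c]{1}_{s,t}^{2}$ is clear, and the step is $\fibonomial{n+2}{2}_{s,t}=t\fibonomial{n+1}{2}_{s,t}+\brk[c]{n+1}_{s,t}^{2}=t\sum_{k=1}^{n}t^{n-k}\brk[c]{k}_{s,t}^{2}+\brk[c]{n+1}_{s,t}^{2}=\sum_{k=1}^{n+1}t^{n+1-k}\brk[c]{k}_{s,t}^{2}$.

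Part (2) is the one genuinely new computation and the step I expect to be the main obstacle, because the asymmetric exponents $n+1$ and $n-1$ on $\varphi_{s,t}$ and $\varphi_{s,t}^{\prime}$ tempt one into an induction that is not actually needed. Instead I would substitute the closed forms $\fibonomial{n}{2}_{s,t}=\brk[c]{n-1}_{s,t}\brk[c]{n}_{s,t}/\brk[c]{2}_{s,t}$ and $\fibonomial{n+1}{2}_{s,t}=\brk[c]{n}_{s,t}\brk[c]{n+1}_{s,t}/\brk[c]{2}_{s,t}$, factor out the common $\brk[c]{n}_{s,t}/\brk[c]{2}_{s,t}$ from both sides, and reduce the statement to the scalar identity $\varphi_{s,t}^{n+1}\brk[c]{n-1}_{s,t}+\varphi_{s,t}^{\prime(n-1)}\brk[c]{n+1}_{s,t}=\brk[a]{n}_{s,t}\brk[c]{n}_{s,t}$. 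Here the Binet formula finishes everything: expanding the left-hand side, the two mixed terms $\varphi_{s,t}^{n+1}\varphi_{s,t}^{\prime(n-1)}$ occur with opposite signs and cancel, leaving $(\varphi_{s,t}^{2n}-\varphi_{s,t}^{\prime 2n})/(\varphi_{s,t}-\varphi_{s,t}^{\prime})$, while the right-hand side $(\varphi_{s,t}^{n}+\varphi_{s,t}^{\prime n})(\varphi_{s,t}^{n}-\varphi_{s,t}^{\prime n})/(\varphi_{s,t}-\varphi_{s,t}^{\prime})$ telescopes to the same difference; both sides equal $\brk[c]{2n}_{s,t}$. I would then verify the boundary case $n=1$ (where $\brk[c]{n-1}_{s,t}=\brk[c]{0}_{s,t}=0$) to confirm the reduction remains valid there. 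As a remark, the whole theorem can alternatively be routed through the Pascal-type recurrences~(\ref{eqn_gspn_recu1}) and~(\ref{eqn_gspn_recu2}), but the Binet substitution keeps the cancellations transparent and removes the need for separate base cases.
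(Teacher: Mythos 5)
Your proposal is correct and follows essentially the same route as the paper's own proof: part (1) by substituting the recurrence $\brk[c]{n+2}_{s,t}=s\brk[c]{n+1}_{s,t}+t\brk[c]{n}_{s,t}$ into the closed form with $\brk[c]{2}_{s,t}=s$, part (2) by factoring out $\brk[c]{n}_{s,t}/\brk[c]{2}_{s,t}$ and reducing to $\varphi_{s,t}^{n+1}\brk[c]{n-1}_{s,t}+\varphi_{s,t}^{\prime(n-1)}\brk[c]{n+1}_{s,t}=\brk[c]{2n}_{s,t}=\brk[a]{n}_{s,t}\brk[c]{n}_{s,t}$, and part (3) by iterating part (1). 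The only differences are cosmetic: you make the Binet cancellation explicit and check the $n=1$ boundary case, both of which the paper leaves implicit.
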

\begin{proof}
We have
    \begin{align*}
        \fibonomial{n+2}{2}_{s,t}&=\frac{\brk[c]{n+1}_{s,t}\brk[c]{n+2}_{s,t}}{\brk[c]{2}_{s,t}}\\
        &=\frac{\brk[c]{n+1}_{s,t}(s\brk[c]{n+1}_{s,t}+t\brk[c]{n}_{s,t})}{\brk[c]{2}_{s,t}}\\
        &=s\frac{\brk[c]{n+1}_{s,t}^2}{s}+t\frac{\brk[c]{n+1}_{s,t}\brk[c]{n}_{s,t}}{\brk[c]{2}_{s,t}}\\
        &=\brk[c]{n+1}_{s,t}^2+t\fibonomial{n+1}{2}_{s,t}.
    \end{align*}
The statement 2 is proved as follows:
\begin{align*}
    \varphi_{s,t}^{n+1}\fibonomial{n}{2}_{s,t}+\varphi_{s,t}^{\prime(n-1)}\fibonomial{n+1}{2}_{s,t}&=\varphi_{s,t}^{n+1}\frac{\brk[c]{n}_{s,t}\brk[c]{n-1}_{s,t}}{\brk[c]{2}_{s,t}}+\varphi_{s,t}^{\prime(n-1)}\frac{\brk[c]{n}_{s,t}\brk[c]{n+1}_{s,t}}{\brk[c]{2}_{s,t}}\\
    &=\frac{\brk[c]{n}_{s,t}}{\brk[c]{2}_{s,t}}(\varphi_{s,t}^{n+1}\brk[c]{n-1}_{s,t}+\varphi_{s,t}^{\prime(n-1)})\brk[c]{n+1}_{s,t})\\
    &=\frac{\brk[c]{n}_{s,t}}{\brk[c]{2}_{s,t}}\brk[c]{2n}_{s,t}\\
    &=\frac{\brk[a]{n}_{s,t}}{\brk[c]{2}_{s,t}}\brk[c]{n}_{s,t}^2.
\end{align*}
By iterating 1., we obtain the result 3.
\end{proof}
If we set $s=1+q$, $t=-q$ in the above theorem, then
\begin{align*}
   \qbinom{n+2}{2}_{q}&=-q\qbinom{n+1}{2}_{q}+\left(\frac{1-q^{n+1}}{1-q}\right)^2,\\
   \qbinom{n}{2}_{q}+q^{n-1}\qbinom{n+1}{2}_{q}&=\frac{1+q^n}{1+q}\brk[s]{n}_{q}^2,
\end{align*}
and by iterating we get a Schlosser result \cite{sch}
\begin{equation}
    \qbinom{n+1}{2}_{q}=\sum_{k=1}^{n}(-q)^{n-k}\left(\frac{1-q^k}{1-q}\right)^2.
\end{equation}

\begin{theorem}
For all $n\in\N$
\begin{equation}\label{eqn_tri_cube}
    \fibonomial{n+2}{2}_{s,t}^2-t^2\fibonomial{n+1}{2}_{s,t}^2=\left(\frac{\brk[c]{n+2}_{s,t}+t\brk[c]{n}_{s,t}}{s}\right)\brk[c]{n+1}_{s,t}^3.
\end{equation}
\end{theorem}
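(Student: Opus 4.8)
The plan is to recognize the left-hand side as a difference of squares and to exploit statement 1 of the preceding Theorem \ref{eqn_tri_sq}, which supplies one of the two resulting factors essentially for free. First I would factor
\begin{equation*}
\fibonomial{n+2}{2}_{s,t}^2-t^2\fibonomial{n+1}{2}_{s,t}^2=\left(\fibonomial{n+2}{2}_{s,t}-t\fibonomial{n+1}{2}_{s,t}\right)\left(\fibonomial{n+2}{2}_{s,t}+t\fibonomial{n+1}{2}_{s,t}\right).
\end{equation*}
By statement 1 of Theorem \ref{eqn_tri_sq}, namely $\fibonomial{n+2}{2}_{s,t}=t\fibonomial{n+1}{2}_{s,t}+\brk[c]{n+1}_{s,t}^2$, the first factor collapses to $\brk[c]{n+1}_{s,t}^2$, so the whole problem reduces to evaluating the second factor.

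For that second factor I would expand both triangular numbers through the definition, using $\brk[c]{2}_{s,t}=s\brk[c]{1}_{s,t}+t\brk[c]{0}_{s,t}=s$, so that $\fibonomial{n+2}{2}_{s,t}=\brk[c]{n+1}_{s,t}\brk[c]{n+2}_{s,t}/s$ and $\fibonomial{n+1}{2}_{s,t}=\brk[c]{n}_{s,t}\brk[c]{n+1}_{s,t}/s$. This gives
\begin{equation*}
\fibonomial{n+2}{2}_{s,t}+t\fibonomial{n+1}{2}_{s,t}=\frac{\brk[c]{n+1}_{s,t}\brk[c]{n+2}_{s,t}}{s}+t\frac{\brk[c]{n}_{s,t}\brk[c]{n+1}_{s,t}}{s}=\frac{\brk[c]{n+1}_{s,t}\left(\brk[c]{n+2}_{s,t}+t\brk[c]{n}_{s,t}\right)}{s}.
\end{equation*}
Multiplying this by the first factor $\brk[c]{n+1}_{s,t}^2$ then yields $\brk[c]{n+1}_{s,t}^3\left(\brk[c]{n+2}_{s,t}+t\brk[c]{n}_{s,t}\right)/s$, which is exactly the asserted right-hand side.

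There is essentially no obstacle in this argument: the only nonroutine move is spotting the difference-of-squares factorization that lets statement 1 of Theorem \ref{eqn_tri_sq} do the heavy lifting, after which everything is bookkeeping. Should one prefer a fully self-contained derivation that avoids invoking the earlier theorem, the same result follows by writing out all three triangular numbers directly, pulling the common factor $\brk[c]{n+1}_{s,t}^2/s^2$ out of the left-hand side, and simplifying the residual difference of squares $\brk[c]{n+2}_{s,t}^2-t^2\brk[c]{n}_{s,t}^2=(\brk[c]{n+2}_{s,t}-t\brk[c]{n}_{s,t})(\brk[c]{n+2}_{s,t}+t\brk[c]{n}_{s,t})$; the defining recurrence (\ref{eqn_def_fibo}) collapses the first of these factors to $s\brk[c]{n+1}_{s,t}$, and cancelling one power of $s$ recovers the identical one-line identity. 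I would present the factorization route as the cleaner of the two.
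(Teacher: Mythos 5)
Your proof is correct, and it reorganizes the algebra in a way that is genuinely different from, and slightly cleaner than, the paper's own derivation, even though both arguments run through the same key lemma. The paper also invokes statement 1 of Theorem \ref{eqn_tri_sq}, but uses it by \emph{squaring}: it expands $\fibonomial{n+2}{2}_{s,t}^2=\left(t\fibonomial{n+1}{2}_{s,t}+\brk[c]{n+1}_{s,t}^2\right)^2$ trinomially, rewrites the cross term $2t\fibonomial{n+1}{2}_{s,t}\brk[c]{n+1}_{s,t}^2$ as $2t\brk[c]{n}_{s,t}\brk[c]{n+1}_{s,t}^3/s$ using $\brk[c]{2}_{s,t}=s$, and then recombines $\frac{2t}{s}\brk[c]{n}_{s,t}+\brk[c]{n+1}_{s,t}$ into $\bigl(\brk[c]{n+2}_{s,t}+t\brk[c]{n}_{s,t}\bigr)/s$ via the recurrence (\ref{eqn_def_fibo}). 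Your difference-of-squares factorization avoids the quadratic expansion and the cross-term bookkeeping entirely: statement 1 collapses the factor $\fibonomial{n+2}{2}_{s,t}-t\fibonomial{n+1}{2}_{s,t}$ to $\brk[c]{n+1}_{s,t}^2$, and the factor $\fibonomial{n+2}{2}_{s,t}+t\fibonomial{n+1}{2}_{s,t}$ comes straight from the definition, which makes the provenance of the cube $\brk[c]{n+1}_{s,t}^3$ and the divisor $s$ transparent. Your self-contained fallback is also sound, since $\brk[c]{n+2}_{s,t}-t\brk[c]{n}_{s,t}=s\brk[c]{n+1}_{s,t}$ by (\ref{eqn_def_fibo}). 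The one thing the paper's squaring route buys is that it exhibits the identity directly in the form $\fibonomial{n+2}{2}_{s,t}^2=t^2\fibonomial{n+1}{2}_{s,t}^2+(\cdots)$, which is precisely the one-step recursion that gets telescoped in the following theorem to prove Eq.~(\ref{eqn_cub_sq}); but since your factorization establishes the identical identity, nothing downstream is affected, and as a standalone proof yours is the tidier of the two.
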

\begin{proof}
From Theorem \ref{eqn_tri_sq}
\begin{align*}
    \fibonomial{n+2}{2}_{s,t}^2&=\left(t\fibonomial{n+1}{2}_{s,t}+\brk[c]{n+1}_{s,t}^2\right)^2\\
    &=t^2\fibonomial{n+1}{2}_{s,t}^2+2t\fibonomial{n+1}{2}_{s,t}^2\brk[c]{n+1}_{s,t}^2+\brk[c]{n+1}_{s,t}^4\\
    &=t^2\fibonomial{n+1}{2}_{s,t}^2+2t\frac{\brk[c]{n}_{s,t}\brk[c]{n+1}_{s,t}^3}{\brk[c]{2}_{s,t}}+\brk[c]{n+1}_{s,t}^4\\
    &=t^2\fibonomial{n+1}{2}_{s,t}^2+\left(\frac{2t}{s}\brk[c]{n}_{s,t}+\brk[c]{n+1}_{s,t}\right)\brk[c]{n+1}_{s,t}^3\\
    &=t^2\fibonomial{n+1}{2}_{s,t}^2+\left(\frac{t\brk[c]{n}_{s,t}+\brk[c]{n+2}_{s,t}}{s}\right)\brk[c]{n+1}_{s,t}^3.
\end{align*}
The rest of the proof follows easily.
\end{proof}
Eq.(\ref{eqn_tri_cube}) is the $(s,t)$-analog of the identity
\begin{equation*}
    \mathrm{T}_{n+1}^2-\mathrm{T}_{n}^2=n^3.
\end{equation*}
From Warnaar \cite{war},
\begin{equation*}
    \qbinom{n+2}{2}_{q}^2-q^2\qbinom{n+1}{2}_{q}^2=\left(\frac{1-q^{2(n+1)}}{1-q^2}\right)\left(\frac{1-q^{n+1}}{1-q}\right)^2.
\end{equation*}

\begin{theorem}
For all $n\in\N$,
\begin{equation}\label{eqn_cub_sq}
    \sum_{k=1}^{n}t^{2(n-k)}\left(\frac{\brk[c]{k+1}_{s,t}+t\brk[c]{k-1}_{s,t}}{s}\right)\brk[c]{k}_{s,t}^3=\fibonomial{n+1}{2}_{s,t}^2.
\end{equation}
\end{theorem}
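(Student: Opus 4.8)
The plan is to avoid attacking the sum head-on and instead recognize the summand as a telescoping difference, feeding on the preceding theorem (Eq.~(\ref{eqn_tri_cube})). Writing $\T_{k}(s,t)=\fibonomial{k+1}{2}_{s,t}$ for the generalized triangular numbers, I would first replace $n$ by $k-1$ in Eq.~(\ref{eqn_tri_cube}) to obtain
\[
\left(\frac{\brk[c]{k+1}_{s,t}+t\brk[c]{k-1}_{s,t}}{s}\right)\brk[c]{k}_{s,t}^3=\fibonomial{k+1}{2}_{s,t}^2-t^2\fibonomial{k}{2}_{s,t}^2=\T_{k}(s,t)^2-t^2\,\T_{k-1}(s,t)^2.
\]
Thus each term on the left-hand side of (\ref{eqn_cub_sq}) is an explicit difference of consecutive squared triangular numbers, which is the whole point: the cubic-looking coefficient is secretly a first difference.

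Next I would substitute this into the sum and absorb the extra factor $t^2$ into the power of $t$. Setting $b_{j}=t^{2(n-j)}\,\T_{j}(s,t)^2$, the general term becomes
\[
t^{2(n-k)}\bigl(\T_{k}(s,t)^2-t^2\,\T_{k-1}(s,t)^2\bigr)=t^{2(n-k)}\T_{k}(s,t)^2-t^{2(n-k+1)}\T_{k-1}(s,t)^2=b_{k}-b_{k-1},
\]
so summing over $k=1,\dots,n$ collapses to $b_{n}-b_{0}=\T_{n}(s,t)^2-t^{2n}\,\T_{0}(s,t)^2$.

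Finally I would dispose of the boundary term: since $\brk[c]{0}_{s,t}=0$ we have $\T_{0}(s,t)=\fibonomial{1}{2}_{s,t}=0$, whence $b_{0}=0$ and the sum equals $\T_{n}(s,t)^2=\fibonomial{n+1}{2}_{s,t}^2$, as claimed. There is no genuine obstacle here; the only step demanding care is the index shift that exposes the telescope, namely matching $t^{2(n-k)+2}\T_{k-1}(s,t)^2$ with $b_{k-1}=t^{2(n-(k-1))}\T_{k-1}(s,t)^2$, after which everything is bookkeeping. It is worth remarking that (\ref{eqn_cub_sq}) is precisely the $(s,t)$-analog of the Nicomachus identity $\sum_{k=1}^{n}k^3=\T_{n}^2$, recovered at $s=2$, $t=-1$: there the coefficient $\frac{\brk[c]{k+1}_{s,t}+t\brk[c]{k-1}_{s,t}}{s}$ collapses to $1$ and $t^{2(n-k)}=1$, so the left-hand side becomes $\sum_{k=1}^{n}k^3$.
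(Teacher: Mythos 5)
Your proof is correct and follows essentially the same route as the paper: the paper's own argument writes $\fibonomial{n+1}{2}_{s,t}^2$ as the telescoping sum $\sum_{k}\bigl(t^{2(n-k)}\fibonomial{k+1}{2}_{s,t}^2-t^{2(n-k+1)}\fibonomial{k}{2}_{s,t}^2\bigr)$ and invokes Eq.~(\ref{eqn_tri_cube}) term by term, which is exactly your telescope read in the opposite direction. Your version is in fact slightly more careful than the paper's, since you make the index shift $n\mapsto k-1$ and the vanishing boundary term $\fibonomial{1}{2}_{s,t}=0$ explicit rather than leaving them implicit in a ``$\cdots$'' display.
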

\begin{proof}
\begin{align*}
    \fibonomial{n+1}{2}_{s,t}^2&=\left(\fibonomial{n+1}{2}_{s,t}^2-t^2\fibonomial{n}{2}_{s,t}^2\right)+\left(t^2\fibonomial{n}{2}_{s,t}^2-t^4\fibonomial{n-1}{2}_{s,t}^2\right)\\
    &\hspace{2cm}+\cdots+\left(t^{2n-2}\fibonomial{2}{2}_{s,t}^2-t^{2n}\fibonomial{1}{2}_{s,t}^2\right)\\
    &=\left(\frac{\brk[c]{n+1}_{s,t}+t\brk[c]{n-1}_{s,t}}{s}\right)\brk[c]{n}_{s,t}^3+t^2\left(\frac{\brk[c]{n}_{s,t}+t\brk[c]{n-2}_{s,t}}{s}\right)\brk[c]{n-1}_{s,t}^3+\\
    &\cdots+t^{2n-2}\left(\frac{\brk[c]{2}_{s,t}}{s}\right)\brk[c]{1}_{s,t}^3\\
    &=\sum_{k=1}^{n}t^{2(n-k)}\left(\frac{\brk[c]{k+1}_{s,t}+t\brk[c]{k-1}_{s,t}}{s}\right)\brk[c]{k}_{s,t}^3.
\end{align*}    
\end{proof}
The Eq.(\ref{eqn_cub_sq}) can be written as
\begin{equation*}
    \sum_{k=1}^{n}t^{2(n-k)}\left(\frac{\varphi_{s,t}^{2k}-\varphi_{s,t}^{\prime2k}}{\varphi_{s,t}^2-\varphi_{s,t}^{\prime2}}\right)\left(\frac{\varphi_{s,t}^k-\varphi_{s,t}^{\prime k}}{\varphi_{s,t}-\varphi_{s,t}^\prime}\right)^2=\fibonomial{n+1}{2}_{s,t}^2.
\end{equation*}
Eq.(\ref{eqn_cub_sq}) is the $(s,t)$-analog of the identity
\begin{equation*}
    \sum_{k=1}^{n}k^3=\left(\sum_{k=1}^{n}k\right)^2.
\end{equation*}

\begin{corollary}
For all $n\in\N$
\begin{enumerate}
    \item The Fibonacci analog of the sum of cubes:
    \begin{equation*}
        \sum_{k=1}^{n}(F_{k+1}+F_{k-1})F_{k}^3=F_{n}^2F_{n+1}^2.
    \end{equation*}
    \item The Pell analog of the sum of cubes:
    \begin{equation*}
        \sum_{k=1}^{n}(P_{k+1}+P_{k-1})P_{k}^3=\frac{1}{2}P_{n}^2P_{n+1}^2.
    \end{equation*}
    \item The Jacobsthal analog of the sum of cubes:
    \begin{equation*}
        \sum_{k=1}^{n}4^{n-k}(J_{k+1}+2J_{k-1})J_{k}^3=J_{n}^2J_{n+1}^2.
    \end{equation*}
    \item The Mersenne analog of the sum of cubes:
    \begin{equation*}
        \sum_{k=1}^{n}4^{n-k}(2^{k}+1)(2^k-1)^3=\frac{1}{3}(2^n-1)^2(2^{n+1}-1)^2.
    \end{equation*}
    \item The $q$-identity of Warnaar \cite{war}:
    \begin{equation}\label{eqn_war}
    \sum_{k=1}^{n}q^{2(n-k)}\left(\frac{1-q^{2k}}{1-q^2}\right)\left(\frac{1-q^k}{1-q}\right)^2=\qbinom{n+1}{2}_{q}^2.
\end{equation}
\end{enumerate}
\end{corollary}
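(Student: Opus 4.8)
The plan is to obtain all five parts as direct specializations of Eq.~(\ref{eqn_cub_sq}), since each named sequence arises from a specific choice of the parameters $(s,t)$ recorded in the Preliminaries. The only quantities I need to track in each case are the closed form of $\brk[c]{k}_{s,t}$, the closed form of the generalized triangular number $\fibonomial{n+1}{2}_{s,t}$, the scalar $t^{2(n-k)}$, and the coefficient $\frac{\brk[c]{k+1}_{s,t}+t\brk[c]{k-1}_{s,t}}{s}$. All four are already tabulated in the earlier sections, so the work is bookkeeping rather than discovery.

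First I would simplify that coefficient once and for all. Using the Binet form $\brk[c]{m}_{s,t}=\frac{\varphi_{s,t}^m-\varphi_{s,t}^{\prime m}}{\varphi_{s,t}-\varphi_{s,t}^\prime}$ together with $\varphi_{s,t}\varphi_{s,t}^\prime=-t$ and $\varphi_{s,t}+\varphi_{s,t}^\prime=s$, a one-line computation gives $\brk[c]{k+1}_{s,t}+t\brk[c]{k-1}_{s,t}=\varphi_{s,t}^k+\varphi_{s,t}^{\prime k}=\brk[a]{k}_{s,t}$, so the coefficient equals $\brk[a]{k}_{s,t}/s$. This matches the Binet rewriting of Eq.~(\ref{eqn_cub_sq}) already displayed in the text, and it is what lets each specialization be read off mechanically.

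Next I would substitute case by case. For Fibonacci $(s,t)=(1,1)$ and Jacobsthal $(s,t)=(1,2)$ the denominator $s=1$ disappears and the triangular numbers are the honest products $F_nF_{n+1}$ and $J_nJ_{n+1}$, so parts 1 and 3 fall out immediately, with $t^{2(n-k)}=1$ and $4^{n-k}$ respectively. For Pell $(s,t)=(2,1)$ and Mersenne $(s,t)=(3,-2)$ the closed form of $\fibonomial{n+1}{2}_{s,t}$ carries a rational normalization ($\tfrac12 P_nP_{n+1}$ and $\tfrac13(2^n-1)(2^{n+1}-1)$), so after squaring the right-hand side I would clear the resulting $\tfrac14$ and $\tfrac19$ by absorbing $s=2$ and $s=3$ from the coefficient $\brk[a]{k}_{s,t}/s$; in the Mersenne case this also uses the simplification $(2^{k+1}-1)-2(2^{k-1}-1)=2^k+1$ to recognize $\brk[a]{k}_{3,-2}=2^k+1$, which is what produces the stated summand. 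Finally, for Warnaar I would take $(s,t)=(1+q,-q)$, so $\varphi_{s,t}=1$, $\varphi_{s,t}^\prime=q$, $\brk[c]{k}_{s,t}=\frac{1-q^k}{1-q}$ and $t^{2(n-k)}=q^{2(n-k)}$; the Binet rewriting of Eq.~(\ref{eqn_cub_sq}) then reproduces Eq.~(\ref{eqn_war}) verbatim, since $\frac{\varphi_{s,t}^{2k}-\varphi_{s,t}^{\prime2k}}{\varphi_{s,t}^2-\varphi_{s,t}^{\prime2}}=\frac{1-q^{2k}}{1-q^2}$.

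The argument is entirely routine once Eq.~(\ref{eqn_cub_sq}) is in hand, and there is no genuine obstacle. The only steps demanding a little care are the two bookkeeping points above: matching the rational prefactors in the Pell and Mersenne triangular numbers against the $1/s$ hidden in the coefficient, and performing the short Mersenne simplification. Everything else is substitution.
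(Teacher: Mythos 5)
Your proposal is correct and takes essentially the same route as the paper, which offers no separate proof and treats the corollary as an immediate specialization of Eq.~(\ref{eqn_cub_sq}) at $(s,t)=(1,1)$, $(2,1)$, $(1,2)$, $(3,-2)$, and $(1+q,-q)$, exactly as you do. Your bookkeeping checks out in every case, including the normalization step of clearing the $\tfrac14$ and $\tfrac19$ against $s=2$ and $s=3$, and the Mersenne simplification $(2^{k+1}-1)-2(2^{k-1}-1)=2^{k}+1$ identifying $\brk[a]{k}_{3,-2}$.
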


\subsection{Generalized tetrahedral numbers}
Some specializations of generalized tetrahedral numbers are
\begin{align*}
    \fibonomial{n+2}{3}_{1,1}&=\frac{1}{2}F_{n}F_{n+1}F_{n+2}\\
    &=(0,1,3,15,60,260,1092,4641,19635,\ldots),\\
    \fibonomial{n+2}{3}_{2,1}&=\frac{1}{10}P_{n}P_{n+1}P_{n+2}\\
    &=(1, 12, 174, 2436, 34307, 482664,\ldots),\\
    \fibonomial{n+2}{3}_{1,2}&=\frac{1}{3}J_{n}J_{n+1}J_{n+2}\\
    &=(0,1,5,55,385,3311,25585,208335,\ldots),\\
    \fibonomial{n+2}{3}_{3,-2}&=\frac{1}{21}(2^n-1)(2^{n+1}-1)(2^{n+2}-1)\\
    &=(0,1, 15, 155, 1395, 11811, 97155,\ldots).
\end{align*}
The sequences $\fibonomial{n+2}{3}_{1,1}$, $\fibonomial{n+2}{3}_{2,1}$, and $\fibonomial{n+2}{3}_{3,-2}$ are the sequences A001655, A099930, and A006096, respectively, in \cite{sloane}. The sequence $\fibonomial{n+2}{3}_{1,2}$ has never been investigated.
From Eqs. (\ref{eqn_gspn_recu1}) and (\ref{eqn_gspn_recu2}), and Theorem \ref{theo_reduc}, 
\begin{align*}
    \fibonomial{n+3}{3}_{s,t}&=\varphi_{s,t}^3\fibonomial{n+2}{3}_{s,t}+\varphi_{s,t}^{\prime n}\fibonomial{n+2}{2}_{s,t},\\
    \fibonomial{n+3}{3}_{s,t}&=\varphi_{s,t}^{\prime3}\fibonomial{n+2}{3}_{s,t}+\varphi_{s,t}^{n}\fibonomial{n+2}{2}_{s,t},\\
    \fibonomial{n+2}{3}_{s,t}&=\sum_{k=1}^{n}\varphi_{s,t}^{3(n-k)}\varphi_{s,t}^{\prime(k-1)}\fibonomial{k+1}{2}_{s,t},\\
    \fibonomial{n+3}{3}_{s,t}&=\sum_{k=1}^{n}\varphi_{s,t}^{\prime3(n-k)}\varphi_{s,t}^{k-1}\fibonomial{k+1}{2}_{s,t}.
\end{align*}

\begin{theorem}
For all $n\geq0$
    \begin{equation}\label{eqn_iden_tetra}
        \fibonomial{n+3}{3}_{s,t}=st\fibonomial{n+2}{3}_{s,t}+\brk[c]{n+1}_{s,t}\fibonomial{n+2}{2}_{s,t}.
    \end{equation}
\end{theorem}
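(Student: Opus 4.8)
The plan is to reduce the identity to an elementary relation satisfied by the sequence $\brk[c]{n}_{s,t}$ alone. First I would expand the three generalized simplicial numbers through their defining products,
\[
\fibonomial{n+3}{3}_{s,t}=\frac{\brk[c]{n+1}_{s,t}\brk[c]{n+2}_{s,t}\brk[c]{n+3}_{s,t}}{\brk[c]{3}_{s,t}!},\qquad \fibonomial{n+2}{3}_{s,t}=\frac{\brk[c]{n}_{s,t}\brk[c]{n+1}_{s,t}\brk[c]{n+2}_{s,t}}{\brk[c]{3}_{s,t}!},
\]
together with $\fibonomial{n+2}{2}_{s,t}=\brk[c]{n+1}_{s,t}\brk[c]{n+2}_{s,t}/\brk[c]{2}_{s,t}!$. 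Every term then carries the common factor $\brk[c]{n+1}_{s,t}\brk[c]{n+2}_{s,t}$; using $\brk[c]{3}_{s,t}!/\brk[c]{2}_{s,t}!=\brk[c]{3}_{s,t}$ to put the last summand over the denominator $\brk[c]{3}_{s,t}!$ as well, I would cancel the factor $\brk[c]{n+1}_{s,t}\brk[c]{n+2}_{s,t}/\brk[c]{3}_{s,t}!$ from the whole equation. This collapses the claimed identity to the single scalar relation
\[
\brk[c]{n+3}_{s,t}=st\,\brk[c]{n}_{s,t}+\brk[c]{3}_{s,t}\brk[c]{n+1}_{s,t}.
\]

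The heart of the proof is therefore to verify this last relation, and here I would simply iterate the defining recurrence (\ref{eqn_def_fibo}). Applying it twice gives $\brk[c]{n+3}_{s,t}=s\brk[c]{n+2}_{s,t}+t\brk[c]{n+1}_{s,t}=s\bigl(s\brk[c]{n+1}_{s,t}+t\brk[c]{n}_{s,t}\bigr)+t\brk[c]{n+1}_{s,t}=(s^{2}+t)\brk[c]{n+1}_{s,t}+st\,\brk[c]{n}_{s,t}$, and since $\brk[c]{3}_{s,t}=s\brk[c]{2}_{s,t}+t\brk[c]{1}_{s,t}=s^{2}+t$, this matches the required relation exactly. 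Multiplying back by the cancelled factor recovers Eq.\ (\ref{eqn_iden_tetra}). This route is purely computational and, notably, never uses $s^{2}+4t\neq0$, so it also covers the degenerate case.

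An alternative I would keep in mind is a one-line derivation from the two recurrences for $\fibonomial{n+3}{3}_{s,t}$ displayed immediately above the theorem: taking their linear combination with weights $-\varphi_{s,t}^{\prime}/(\varphi_{s,t}-\varphi_{s,t}^{\prime})$ and $\varphi_{s,t}/(\varphi_{s,t}-\varphi_{s,t}^{\prime})$ (which sum to $1$), the coefficient of $\fibonomial{n+2}{3}_{s,t}$ becomes $-\varphi_{s,t}\varphi_{s,t}^{\prime}(\varphi_{s,t}+\varphi_{s,t}^{\prime})=st$ and the coefficient of $\fibonomial{n+2}{2}_{s,t}$ becomes $(\varphi_{s,t}^{n+1}-\varphi_{s,t}^{\prime(n+1)})/(\varphi_{s,t}-\varphi_{s,t}^{\prime})=\brk[c]{n+1}_{s,t}$, using $\varphi_{s,t}\varphi_{s,t}^{\prime}=-t$ and $\varphi_{s,t}+\varphi_{s,t}^{\prime}=s$. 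I expect no genuine obstacle in either route; the only thing to watch is the index bookkeeping in the $\fibonomial{\cdot}{\cdot}_{s,t}$ notation (which product of $\brk[c]{\cdot}_{s,t}$ each symbol abbreviates), since a single shifted index would throw off the cancellation in the first step.
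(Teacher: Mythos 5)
Your proposal is correct and is essentially the paper's own proof: the paper likewise expands $\fibonomial{n+3}{3}_{s,t}$, applies the recurrence (\ref{eqn_def_fibo}) twice to get $\brk[c]{n+3}_{s,t}=(s^{2}+t)\brk[c]{n+1}_{s,t}+st\brk[c]{n}_{s,t}$, and uses $\brk[c]{3}_{s,t}=s^{2}+t$ to reassemble the fibonomials, which is exactly your scalar relation $\brk[c]{n+3}_{s,t}=st\brk[c]{n}_{s,t}+\brk[c]{3}_{s,t}\brk[c]{n+1}_{s,t}$ with the common factor $\brk[c]{n+1}_{s,t}\brk[c]{n+2}_{s,t}/\brk[c]{3}_{s,t}!$ restored. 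Your alternative derivation via the weighted combination of the two Pascal recurrences is also valid, though as you note it needs $s^{2}+4t\neq0$, which the direct computation (and the paper) avoids.
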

\begin{proof}
We have
    \begin{align*}
        \fibonomial{n+3}{3}_{s,t}&=\frac{\brk[c]{n+1}_{s,t}\brk[c]{n+2}_{s,t}\brk[c]{n+3}_{s,t}}{\brk[c]{3}_{s,t}!}\\
        &=\frac{\brk[c]{n+1}_{s,t}\brk[c]{n+2}_{s,t}}{\brk[c]{3}_{s,t}!}(s\brk[c]{n+2}_{s,t}+t\brk[c]{n+1}_{s,t})\\
        &=\frac{\brk[c]{n+1}_{s,t}\brk[c]{n+2}_{s,t}}{\brk[c]{3}_{s,t}!}((s^2+t)\brk[c]{n+1}_{s,t}+st\brk[c]{n}_{s,t})\\
        &=\brk[c]{n+1}_{s,t}\fibonomial{n+2}{2}_{s,t}+st\fibonomial{n+2}{3}_{s,t}.
    \end{align*}
\end{proof}
Eq.(\ref{eqn_iden_tetra}) is the $(s,t)$-analog of the identity
\begin{equation*}
    \mathrm{Te}_{n+1}+2\mathrm{Te}_{n}=(n+1)\mathrm{T}_{n+1}.
\end{equation*}
If we choose $s=1+q$, $t=-q$ in Eq.(\ref{eqn_iden_tetra}), we obtain 
\begin{equation*}
        \qbinom{n+3}{3}_{q}=-(1+q)q\qbinom{n+2}{3}_{q}+\frac{1-q^{n+1}}{1-q}\qbinom{n+2}{2}_{q}.
    \end{equation*}

\section{Generating functions}

\begin{theorem}\label{theo_nder_geo}
For all $n\geq1$,
    \begin{equation}\label{eqn_nder_geo}
        D_{\varphi,\varphi^\prime}^{n}\left(\frac{1}{1-x}\right)=\frac{\brk[c]{n}_{s,t}!}{(\varphi_{s,t}^nx;q)_{n+1}},
    \end{equation}
where $(a;q)_{n}=\prod_{k=0}^{n-1}(1-aq^k)$ is the 
\end{theorem}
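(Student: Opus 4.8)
The plan is to argue by induction on $n$, after rewriting the $q$-Pochhammer denominator as a product over powers of $\varphi_{s,t}$ and $\varphi_{s,t}^\prime$. The natural base here is $q=\varphi_{s,t}^\prime/\varphi_{s,t}$, under which
\[
(\varphi_{s,t}^n x; q)_{n+1} = \prod_{j=0}^{n}\left(1 - \varphi_{s,t}^{\,n-j}\varphi_{s,t}^{\prime j}\,x\right),
\]
so the target reads $D_{\varphi,\varphi^\prime}^n\!\left(\tfrac{1}{1-x}\right) = \brk[c]{n}_{s,t}!\,\big/\,\prod_{j=0}^{n}\left(1 - \varphi_{s,t}^{\,n-j}\varphi_{s,t}^{\prime j}x\right)$. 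For the base case $n=1$ I would apply the definition of $D_{\varphi,\varphi^\prime}$ to $1/(1-x)$ directly: the numerator $\frac{1}{1-\varphi_{s,t}x} - \frac{1}{1-\varphi_{s,t}^\prime x}$ simplifies to $\frac{(\varphi_{s,t}-\varphi_{s,t}^\prime)x}{(1-\varphi_{s,t}x)(1-\varphi_{s,t}^\prime x)}$, and cancelling the prefactor $(\varphi_{s,t}-\varphi_{s,t}^\prime)x$ leaves $\frac{1}{(1-\varphi_{s,t}x)(1-\varphi_{s,t}^\prime x)}$, which is exactly the $n=1$ instance since $\brk[c]{1}_{s,t}!=1$.

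For the inductive step, set $g(x) = \prod_{j=0}^{n}\left(1 - \varphi_{s,t}^{\,n-j}\varphi_{s,t}^{\prime j}x\right)$ and apply $D_{\varphi,\varphi^\prime}$ to the formula for $n$. Linearity pulls out the constant $\brk[c]{n}_{s,t}!$, and the quotient rule applied to $1/g(x)$ collapses, because $D_{\varphi,\varphi^\prime}(1)=0$, to $-\,D_{\varphi,\varphi^\prime}g(x)\,\big/\,\bigl(g(\varphi_{s,t}x)\,g(\varphi_{s,t}^\prime x)\bigr)$. The key observation is that rescaling the argument shifts every exponent by one, so the two rescaled products differ from the target product only at their extreme indices and overlap in a large common block. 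Concretely,
\[
g(\varphi_{s,t}x) = (1-\varphi_{s,t}^{\,n+1}x)\,C(x), \qquad g(\varphi_{s,t}^\prime x) = C(x)\,(1-\varphi_{s,t}^{\prime(n+1)}x),
\]
where $C(x) = \prod_{j=1}^{n}\left(1 - \varphi_{s,t}^{\,n+1-j}\varphi_{s,t}^{\prime j}x\right)$ is the shared factor indexed by $j=1,\dots,n$.

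With this factorization the numerator telescopes cleanly: $g(\varphi_{s,t}x) - g(\varphi_{s,t}^\prime x) = -C(x)\bigl(\varphi_{s,t}^{\,n+1} - \varphi_{s,t}^{\prime(n+1)}\bigr)x$, and dividing by $(\varphi_{s,t}-\varphi_{s,t}^\prime)x$ converts the bracketed difference into $\brk[c]{n+1}_{s,t}$ via the Binet formula $\brk[c]{m}_{s,t} = (\varphi_{s,t}^m - \varphi_{s,t}^{\prime m})/(\varphi_{s,t}-\varphi_{s,t}^\prime)$ (valid here since $s^2+4t\neq0$); thus $D_{\varphi,\varphi^\prime}g(x) = -\brk[c]{n+1}_{s,t}\,C(x)$. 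Since $g(\varphi_{s,t}x)\,g(\varphi_{s,t}^\prime x) = C(x)\prod_{j=0}^{n+1}\left(1 - \varphi_{s,t}^{\,n+1-j}\varphi_{s,t}^{\prime j}x\right)$, the shared factor $C(x)$ cancels top and bottom, leaving $\brk[c]{n+1}_{s,t}\big/\prod_{j=0}^{n+1}\left(1-\varphi_{s,t}^{\,n+1-j}\varphi_{s,t}^{\prime j}x\right)$. Restoring the constant $\brk[c]{n}_{s,t}!$ produces $\brk[c]{n+1}_{s,t}!$ in the numerator and $(\varphi_{s,t}^{n+1}x;q)_{n+2}$ in the denominator, which is the statement for $n+1$. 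I expect the only real obstacle to be the index bookkeeping: verifying that $g(\varphi_{s,t}x)$ and $g(\varphi_{s,t}^\prime x)$ overlap in exactly the $j=1,\dots,n$ factors so that $C(x)$ cancels, and matching the resulting $(n+2)$-fold product back to $(\varphi_{s,t}^{n+1}x;q)_{n+2}$. Everything else is forced by linearity, the quotient rule, and Binet.
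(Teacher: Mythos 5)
Your proof is correct and follows essentially the same route as the paper: induction on $n$ with the same base case, the quotient rule applied to $1/(\varphi_{s,t}^n x;q)_{n+1}$, and the key identity $D_{\varphi,\varphi^\prime}(\varphi_{s,t}^n x;q)_{n+1}=-\brk[c]{n+1}_{s,t}(\varphi_{s,t}^n\varphi_{s,t}^\prime x;q)_n$, which you obtain by unfolding the Pochhammer symbol into $\prod_{j=0}^{n}(1-\varphi_{s,t}^{n-j}\varphi_{s,t}^{\prime j}x)$ and factoring out the common block $C(x)$. The only difference is that the paper asserts that derivative identity without proof, whereas you actually verify it via the telescoping factorization and Binet --- a worthwhile filled-in detail, but not a different argument.
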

\begin{proof}
Note that
\begin{equation}
    D_{\varphi,\varphi^\prime}\left(\frac{1}{1-x}\right)=\frac{1}{(1-\varphi_{s,t}x)(1-\varphi_{s,t}^\prime x)}=\frac{\brk[c]{1}_{s,t}!}{(\varphi_{s,t}x;q)_{2}}.
\end{equation}
Suppose that Eq.(\ref{eqn_nder_geo}) is true for $n$ and let us prove by induction for $n+1$. As
\begin{equation}
    D_{\varphi,\varphi^\prime}(\varphi_{s,t}^nx;q)_{n+1}=-\brk[c]{n+1}_{s,t}(\varphi_{s,t}^n\varphi_{s,t}^\prime x;q)_{n},
\end{equation}
then
\begin{align*}
    D_{\varphi,\varphi^\prime}^{n+1}\left(\frac{1}{1-x}\right)&=D_{\varphi,\varphi^\prime}D_{\varphi,\varphi^\prime}^{n}\left(\frac{1}{1-x}\right)\\
    &=D_{\varphi,\varphi^\prime}\left(\frac{\brk[c]{n}_{s,t}!}{(\varphi_{s,t}^nx;q)_{n+1}}\right)\\
    &=\frac{-\brk[c]{n}_{s,t}!D_{\varphi,\varphi^\prime}(\varphi_{s,t}^nx;q)_{n+1}}{(\varphi_{s,t}^{n+1}x;q)_{n+1}(\varphi_{s,t}^n\varphi_{s,t}^{\prime}x;q)_{n+1}}\\
    &=\frac{\brk[c]{n+1}_{s,t}!(\varphi_{s,t}^{n}\varphi_{s,t}^{\prime}x;q)_{n}}{(\varphi_{s,t}^{n+1}x;q)_{n+1}(\varphi_{s,t}^{n}\varphi_{s,t}^{\prime}x;q)_{n+1}}\\
    &=\frac{\brk[c]{n+1}_{s,t}!}{(1-(\varphi_{s,t}q)^{n+1}x)(\varphi_{s,t}^{n+1}x;q)_{n+1}}\\
    &=\frac{\brk[c]{n+1}_{s,t}!}{(\varphi_{s,t}^{n+1}x;q)_{n+2}}.
\end{align*}
The proof is reached.
\end{proof}

\begin{theorem}
    \begin{equation}
        \sum_{n=1}^{\infty}\fibonomial{n+d-1}{d}_{s,t}x^n=\frac{x}{(\varphi_{s,t}^{d}x;q)_{d+1}}
    \end{equation}
\end{theorem}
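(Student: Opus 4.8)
The plan is to leverage Theorem \ref{theo_nder_geo}, specialised to $n=d$, which already identifies the closed form $\tfrac{\brk[c]{d}_{s,t}!}{(\varphi_{s,t}^{d}x;q)_{d+1}}$ with the $d$-th $(\varphi,\varphi^\prime)$-derivative of $\tfrac{1}{1-x}$. Since the right-hand side of the claim differs from that expression only by the factor $x/\brk[c]{d}_{s,t}!$, the whole task reduces to computing the power-series expansion of $D_{\varphi,\varphi^\prime}^{d}\bigl(\tfrac{1}{1-x}\bigr)$ and reading off its coefficients. This mirrors the classical derivation of \eqref{eqn_gf_spn}, where $\tfrac{1}{(1-x)^{d+1}}=\tfrac{1}{d!}\tfrac{d^d}{dx^d}\tfrac{1}{1-x}$ is expanded termwise.

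First I would record the action of $D_{\varphi,\varphi^\prime}$ on a monomial. Straight from the definition and the Binet representation $\brk[c]{n}_{s,t}=\frac{\varphi_{s,t}^{n}-\varphi_{s,t}^{\prime n}}{\varphi_{s,t}-\varphi_{s,t}^{\prime}}$,
\[
D_{\varphi,\varphi^\prime}(x^n)=\frac{(\varphi_{s,t}x)^n-(\varphi_{s,t}^\prime x)^n}{(\varphi_{s,t}-\varphi_{s,t}^\prime)x}=\brk[c]{n}_{s,t}\,x^{n-1}.
\]
Iterating $d$ times yields $D_{\varphi,\varphi^\prime}^{d}(x^n)=\brk[c]{n}_{s,t}\brk[c]{n-1}_{s,t}\cdots\brk[c]{n-d+1}_{s,t}\,x^{n-d}$ for $n\geq d$, and $0$ for $n<d$.

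Next I would apply $D_{\varphi,\varphi^\prime}^{d}$ term by term to $\tfrac{1}{1-x}=\sum_{n\geq0}x^n$. By linearity,
\[
D_{\varphi,\varphi^\prime}^{d}\!\left(\frac{1}{1-x}\right)=\sum_{n\geq d}\brk[c]{n}_{s,t}\cdots\brk[c]{n-d+1}_{s,t}\,x^{n-d}=\sum_{m\geq0}\brk[c]{m+1}_{s,t}\cdots\brk[c]{m+d}_{s,t}\,x^{m},
\]
after the shift $m=n-d$. Dividing by $\brk[c]{d}_{s,t}!$ and recognising the product $\brk[c]{m+1}_{s,t}\cdots\brk[c]{m+d}_{s,t}/\brk[c]{d}_{s,t}!$ as $\fibonomial{m+d}{d}_{s,t}$ (the definition with $n\mapsto m+1$), Theorem \ref{theo_nder_geo} gives $\frac{1}{(\varphi_{s,t}^{d}x;q)_{d+1}}=\sum_{m\geq0}\fibonomial{m+d}{d}_{s,t}x^m$. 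Multiplying by $x$ and relabelling $n=m+1$ produces exactly $\sum_{n\geq1}\fibonomial{n+d-1}{d}_{s,t}x^n$, which is the claim.

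The only genuine subtlety is the term-by-term application of $D_{\varphi,\varphi^\prime}^{d}$ to an infinite series. Because $D_{\varphi,\varphi^\prime}f(x)=\frac{f(\varphi_{s,t}x)-f(\varphi_{s,t}^\prime x)}{(\varphi_{s,t}-\varphi_{s,t}^\prime)x}$ is assembled from evaluations at $\varphi_{s,t}x$ and $\varphi_{s,t}^\prime x$, the interchange of operator and sum is legitimate on the disc $|x|<1/\max(|\varphi_{s,t}|,|\varphi_{s,t}^\prime|)$, where all three series converge absolutely; alternatively one may work in the ring of formal power series, where $D_{\varphi,\varphi^\prime}$ is defined coefficientwise by the monomial rule above and only the equality of coefficients is needed. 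I expect this convergence/bookkeeping point to be the main (and essentially the sole) place requiring care, since the algebraic core is immediate once Theorem \ref{theo_nder_geo} is in hand.
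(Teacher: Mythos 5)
Your proposal is correct and takes essentially the same approach as the paper: the paper's proof likewise writes $\frac{x}{(\varphi_{s,t}^{d}x;q)_{d+1}}=\frac{x}{\brk[c]{d}_{s,t}!}D_{\varphi,\varphi^\prime}^{d}\left(\frac{1}{1-x}\right)$ via Theorem \ref{theo_nder_geo}, expands $\frac{1}{1-x}$ as the geometric series, applies the derivative termwise, and shifts the index to recognize $\fibonomial{n+d-1}{d}_{s,t}$. Your explicit verification of the monomial rule $D_{\varphi,\varphi^\prime}(x^n)=\brk[c]{n}_{s,t}x^{n-1}$ and your remarks on justifying the term-by-term differentiation merely spell out steps the paper leaves implicit.
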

\begin{proof}
From Theorem \ref{theo_nder_geo},
    \begin{align*}
        \frac{x}{(\varphi_{s,t}^{d}x;q)_{d+1}}&=\frac{x}{\brk[c]{d}_{s,t}!}D_{\varphi,\varphi^\prime}^d\left(\frac{1}{1-x}\right)\\
        &=\frac{x}{\brk[c]{d}_{s,t}!}D_{\varphi,\varphi^\prime}^{d}\left(\sum_{n=0}^{\infty}x^n\right)\\
        &=\frac{x}{\brk[c]{d}_{s,t}!}\sum_{n=d}^{\infty}\brk[c]{n}_{s,t}\brk[c]{n-1}_{s,t}\cdots\brk[c]{n-d+1}_{s,t}x^{n-d}\\
        &=\frac{x}{\brk[c]{d}_{s,t}!}\sum_{n=0}^{\infty}\frac{\brk[c]{d+n}_{s,t}!}{\brk[c]{n}_{s,t}!}x^n\\
        &=\sum_{n=1}^{\infty}\fibonomial{n+d-1}{d}_{s,t}x^{n}.
    \end{align*}
\end{proof}

\begin{theorem}
    \begin{multline}
        \sum_{n=1}^{\infty}\fibonomial{n+1}{2}_{s,t}^2x^n\\=\frac{x+(\brk[c]{4}\varphi_{s,t}-\varphi_{s,t}^4)x^2-\brk[c]{3}_{s,t}\varphi_{s,t}^3\varphi_{s,t}^{\prime3}x^3+(\brk[c]{3}_{s,t}\varphi_{s,t}^7\varphi_{s,t}^{\prime3}-\brk[c]{4}_{s,t}\varphi_{s,t}^6\varphi_{s,t}^{\prime3})x^4}{(1-t^2x)(1-\varphi_{s,t}^{\prime4}x)(\varphi_{s,t}^4x;q)_{4}}
    \end{multline}
\end{theorem}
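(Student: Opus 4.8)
The plan is to reduce everything to the Binet-type representation of the generalized triangular numbers and then sum a handful of geometric series. First I would record the structural relations forced by the paper's conventions: $\varphi_{s,t}+\varphi_{s,t}^\prime=s$, $\varphi_{s,t}\varphi_{s,t}^\prime=-t$, and $q=\varphi_{s,t}^\prime/\varphi_{s,t}$ (this last is the convention already implicit in Theorem \ref{theo_nder_geo}, since there $(\varphi_{s,t}x;q)_2=(1-\varphi_{s,t}x)(1-\varphi_{s,t}^\prime x)$). Writing $\brk[c]{m}_{s,t}=(\varphi_{s,t}^m-\varphi_{s,t}^{\prime m})/(\varphi_{s,t}-\varphi_{s,t}^\prime)$ and using $\brk[c]{2}_{s,t}=s$, a short computation gives the closed form
\[
  \fibonomial{n+1}{2}_{s,t}=\frac{\brk[c]{n}_{s,t}\brk[c]{n+1}_{s,t}}{s}=\frac{\varphi_{s,t}^{2n+1}+\varphi_{s,t}^{\prime(2n+1)}-s(-t)^{n}}{s\,(\varphi_{s,t}-\varphi_{s,t}^{\prime})^{2}}.
\]

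Next I would square this expression. Expanding the square produces exactly five exponential sequences in $n$, with bases $\varphi_{s,t}^4$, $\varphi_{s,t}^{\prime4}$, $\varphi_{s,t}^3\varphi_{s,t}^\prime$, $\varphi_{s,t}\varphi_{s,t}^{\prime3}$, and $\varphi_{s,t}^2\varphi_{s,t}^{\prime2}=t^2$; here the cross term $2\varphi_{s,t}^{2n+1}\varphi_{s,t}^{\prime(2n+1)}$ and the term $s^2(-t)^{2n}$ both collapse onto the base $t^2$ (using $\varphi_{s,t}\varphi_{s,t}^\prime=-t$), so $\fibonomial{n+1}{2}_{s,t}^2$ is genuinely a combination of only five geometric sequences. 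Since the weight of the base $1$ (the value at $n=0$) is $\fibonomial{1}{2}_{s,t}^2=0$, summing gives the neat proper form $\sum_{n\ge1}\fibonomial{n+1}{2}_{s,t}^2x^n=\sum_{j}A_j/(1-b_jx)$ over the five bases $b_j$, whose common denominator is $(1-\varphi_{s,t}^4x)(1-\varphi_{s,t}^3\varphi_{s,t}^\prime x)(1-t^2x)(1-\varphi_{s,t}\varphi_{s,t}^{\prime3}x)(1-\varphi_{s,t}^{\prime4}x)$. Recognising the factors $(1-\varphi_{s,t}^4x)$, $(1-\varphi_{s,t}^3\varphi_{s,t}^\prime x)$, $(1-\varphi_{s,t}^2\varphi_{s,t}^{\prime2}x)=(1-t^2x)$ and $(1-\varphi_{s,t}\varphi_{s,t}^{\prime3}x)$ together as the $q$-Pochhammer $(\varphi_{s,t}^4x;q)_4$, and inserting one extra factor $(1-t^2x)$ top and bottom, turns the denominator into the displayed $(1-t^2x)(1-\varphi_{s,t}^{\prime4}x)(\varphi_{s,t}^4x;q)_4$. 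This explains the apparent double pole at $x=t^{-2}$: the reduced denominator has degree $5$, the stated one degree $6$, and the stated numerator is the reduced one multiplied by the redundant factor $(1-t^2x)$, which is why it has degree $4$ rather than the degree $3$ of the reduced numerator.

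The main obstacle is purely computational: matching the assembled numerator against $x+(\brk[c]{4}_{s,t}\varphi_{s,t}-\varphi_{s,t}^4)x^2-\brk[c]{3}_{s,t}\varphi_{s,t}^3\varphi_{s,t}^{\prime3}x^3+(\brk[c]{3}_{s,t}\varphi_{s,t}^7\varphi_{s,t}^{\prime3}-\brk[c]{4}_{s,t}\varphi_{s,t}^6\varphi_{s,t}^{\prime3})x^4$. Rather than expand the five residues blindly, I would determine the degree-$4$ numerator by finitely many Taylor coefficients, using that $[x^k]G(x)=\fibonomial{k+1}{2}_{s,t}^2$ for $k=1,2,3,4$ (so $[x^1]=1$, $[x^2]=\brk[c]{3}_{s,t}^2$, and so on), and reduce every coefficient through $\brk[c]{3}_{s,t}=s^2+t$, $\brk[c]{4}_{s,t}=s^3+2st$ together with $\varphi_{s,t}+\varphi_{s,t}^\prime=s$ and $\varphi_{s,t}\varphi_{s,t}^\prime=-t$; since the numerator has no constant term, four matched coefficients pin it down entirely.

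An alternative route, closer to the earlier sections, is to start from Eq.(\ref{eqn_tri_cube}) and rewrite its right-hand side using $\brk[a]{m}_{s,t}\brk[c]{m}_{s,t}=\brk[c]{2m}_{s,t}$, which recasts it as a first-order inhomogeneous recurrence for $\fibonomial{n+1}{2}_{s,t}^2$ with multiplier $t^2$; this directly produces one factor $(1-t^2x)$ in the denominator, while the generating function of the inhomogeneous term supplies the factor $(\varphi_{s,t}^4x;q)_4$ and the remaining $(1-\varphi_{s,t}^{\prime4}x)$. I expect the Binet route to be shorter, with the coefficient bookkeeping in the numerator being the only real work.
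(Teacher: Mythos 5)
Your proposal is correct, but it takes a genuinely different route from the paper's. The paper derives the generating function from its sum-of-cubes identity, Eq.~(\ref{eqn_cub_sq}): it writes $\fibonomial{n+1}{2}_{s,t}^2=\sum_{k=1}^{n}t^{2(n-k)}\brk[s]{k}_{\varphi^2,\varphi^{\prime2}}\brk[s]{k}_{\varphi,\varphi^\prime}^2$, interchanges the order of summation to extract the factor $1/(1-t^2x)$, and then evaluates $\sum_{k\geq1}\brk[s]{k}_{\varphi^2,\varphi^{\prime2}}\brk[s]{k}_{\varphi,\varphi^\prime}^2x^k$ as $(xD_{\varphi,\varphi^\prime})^2(xD_{\varphi^2,\varphi^{\prime2}})\left\{x/(1-x)\right\}$, the operator calculus supplying the factors $(\varphi_{s,t}^4x;q)_4$ and $(1-\varphi_{s,t}^{\prime4}x)$. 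Note that this is essentially your ``alternative route'': convolving with the geometric series $1/(1-t^2x)$ is exactly the generating-function solution of the first-order recurrence with multiplier $t^2$ that you extract from Eq.~(\ref{eqn_tri_cube}). Your primary route --- squaring the Binet form of $\fibonomial{n+1}{2}_{s,t}$ into five geometric sequences with bases $\varphi_{s,t}^4,\varphi_{s,t}^3\varphi_{s,t}^\prime,t^2,\varphi_{s,t}\varphi_{s,t}^{\prime3},\varphi_{s,t}^{\prime4}$ and assembling partial fractions --- is more elementary and buys something the paper's proof does not make visible: an a priori description of the pole structure, and in particular the diagnosis that the displayed fraction is unreduced, since $(1-t^2x)$ appears both standing alone and inside $(\varphi_{s,t}^4x;q)_4$ (via $\varphi_{s,t}^4q^2=\varphi_{s,t}^2\varphi_{s,t}^{\prime2}=t^2$) while the coefficient sequence has only five distinct geometric components, so the factor must cancel against the numerator; the paper never remarks on this. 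Your closed form for $\fibonomial{n+1}{2}_{s,t}$ and the collapse of the cross terms onto the base $t^2$ both check out, and coefficient-matching does pin down the numerator, since two rational functions with the stated denominator and polynomial numerators of degree $\leq d$ vanishing at $0$ that agree to order $x^{d}$ must coincide.

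One loose end to tighten: you assert, rather than prove, that the fully reduced numerator has degree $3$, and this is precisely what licenses determining the stated degree-$4$ numerator from only four Taylor coefficients (otherwise $N_{\mathrm{red}}\cdot(1-t^2x)$ is a priori of degree $5$). The claim amounts to the residue identity $\sum_j A_j/b_j=0$, which does hold: with your five weights it reduces, using $\varphi_{s,t}+\varphi_{s,t}^\prime=s$ and $\varphi_{s,t}\varphi_{s,t}^\prime=-t$, to $(s^2+2t)+(s^2-2t)-2s^2=0$ after multiplying through by $t^2$. Either include that one-line verification, or simply match the five coefficients $[x^1],\dots,[x^5]$ against a numerator of degree $\leq5$ vanishing at $0$; with either fix your argument is complete (under the same implicit nondegeneracy assumption $s^2+4t\neq0$ that the paper itself uses whenever it invokes $q=\varphi_{s,t}^\prime/\varphi_{s,t}$).
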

\begin{proof}
    \begin{align*}
        &\sum_{n=1}^{\infty}\fibonomial{n+1}{2}_{s,t}^2x^n\\
        &\hspace{1cm}=\sum_{n=1}^{\infty}\sum_{k=1}^{n}t^{2(n-k)}\brk[s]{k}_{\varphi^2,\varphi^{\prime2}}\brk[s]{k}_{\varphi,\varphi^\prime}^2x^n\\
        &\hspace{1cm}=\sum_{k=1}^{\infty}\brk[s]{k}_{\varphi^2,\varphi^{\prime2}}\brk[s]{k}_{\varphi,\varphi^\prime}^2x^k\sum_{n=k}^{\infty}t^{2(n-k)}x^n\\
        &\hspace{1cm}=\sum_{k=1}^{\infty}\brk[s]{k}_{\varphi^2,\varphi^{\prime2}}\brk[s]{k}_{\varphi,\varphi^\prime}^2x^k\sum_{n=0}^{\infty}(t^{2}x)^{n}\\
        &\hspace{1cm}=\frac{1}{1-t^2x}(xD_{\varphi,\varphi^\prime})^2(xD_{\varphi^2,\varphi^{\prime2}})\left\{\frac{x}{1-x}\right\}\\
        &\hspace{1cm}=\frac{x+(\brk[c]{4}_{s,t}\varphi_{s,t}-\varphi_{s,t}^4)x^2-\brk[c]{3}_{s,t}\varphi_{s,t}^3\varphi_{s,t}^{\prime3}x^3+(\brk[c]{3}_{s,t}\varphi_{s,t}^7\varphi_{s,t}^{\prime3}-\brk[c]{4}_{s,t}\varphi_{s,t}^6\varphi_{s,t}^{\prime3})x^4}{(1-t^2x)(1-\varphi_{s,t}^{\prime4}x)(\varphi_{s,t}^4x;q)_{4}}.
    \end{align*}
\end{proof}
The $q$-analog of the above theorem is
\begin{equation*}
        \sum_{n=1}^{\infty}\qbinom{n+1}{2}_{q}^{2}x^n=\frac{x+(\brk[s]{4}_{q}-1)x^2-\brk[s]{3}_{q}q^{3}x^3+(\brk[s]{3}_{q}q^{3}-\brk[s]{4}_{q}q^{3})x^4}{(1-q^2x)(1-q^{4}x)(x;q)_{4}}.
    \end{equation*}

\section{Sum of reciprocals}

The generalized Fibonacci Zeta function, or $(s,t)$-Zeta function, is the function defined by
\begin{equation}
    \zeta_{s,t}(z)=\sum_{n=1}^{\infty}\frac{1}{\brk[c]{n}_{s,t}^z}=1+\frac{1}{s^z}+\frac{1}{(s^2+t)^z}+\frac{1}{(s^3+2st)^z}+\cdots.
\end{equation}
Egami \cite{egami} and Navas \cite{navas} independently studied the zeta function $\zeta_{1,1}(z)$. Landau \cite{landau} studied the problem of evaluating $\zeta_{1,1}(1)$. The function $\zeta_{s,t}(z)$ is convergent when $z>0$ and when either $\varphi_{s,t}>1$ and $0<\vert q\vert<1$ or $\varphi_{s,t}^\prime>1$ and $\vert q\vert>1$. Take $z=\sigma+i\beta$. Then $\zeta_{s,t}(z)$ converge when $\Re(z)>0$. Some specialization of $\zeta_{s,t}(1)$ are:
\begin{align*}
    \zeta_{1,1}(1)=\zeta_{F}(1)&=\sum_{n=1}^{\infty}\frac{1}{F_{n}}=1+1+\frac{1}{2}+\frac{1}{3}+\frac{1}{5}+\frac{1}{8}+\cdots\approx3,359885666243\ldots,\\
    \zeta_{2,1}(1)=\zeta_{P}(1)&=\sum_{n=1}^{\infty}\frac{1}{P_{n}}=1+\frac{1}{2}+\frac{1}{5}+\frac{1}{12}+\frac{1}{29}+\cdots\approx 1,81781609195402\ldots,\\
    \zeta_{1,2}(1)=\zeta_{J}(1)&=\sum_{n=1}^{\infty}\frac{1}{J_{n}}=1+1+\frac{1}{3}+\frac{1}{5}+\frac{1}{11}+\frac{1}{21}+\cdots\approx2,67186147\ldots,\\
    \zeta_{3,-2}(1)=\zeta_{M}(1)&=\sum_{n=1}^{\infty}\frac{1}{M_{n}}=1+\frac{1}{3}+\frac{1}{7}+\frac{1}{15}+\frac{1}{31}+\cdots\approx1,57511520737327\ldots,\\
    \zeta_{2t,-1}(1)=\zeta_{U}(1)&=\sum_{n=1}^{\infty}\frac{1}{U_{n-1}(t)}=1+\frac{1}{2t}+\frac{1}{4t^2-1}+\frac{1}{8t^3-4t}+\frac{1}{16t^4-12t^2+1}+\cdots,
\end{align*}
with $t\neq0,\cos\frac{k\pi}{n+1}$, $k=1,2,\ldots,n$.

\begin{theorem}
    \begin{equation}
        \sum_{n=1}^{\infty}\frac{t^n}{\brk[c]{2n}_{s,t}}=\sqrt{s^2+4t}\left[\LL\left(\varphi_{s,t}^{\prime2}/t\right)-\LL\left(\varphi_{s,t}^{\prime4}/t^2\right)\right],
    \end{equation}
where $\LL(q)=\sum_{n=1}^{\infty}\frac{q^n}{1-q^n}$ is the Lambert function.
\end{theorem}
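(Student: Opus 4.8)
The plan is to collapse the left-hand sum to a pair of classical Lambert series by passing through the Binet representation. First I would write $\brk[c]{2n}_{s,t}=(\varphi_{s,t}^{2n}-\varphi_{s,t}^{\prime 2n})/(\varphi_{s,t}-\varphi_{s,t}^{\prime})$ and invoke the two standard relations $\varphi_{s,t}\varphi_{s,t}^{\prime}=-t$ and $\varphi_{s,t}-\varphi_{s,t}^{\prime}=\sqrt{s^2+4t}$. Since $t^n=(-1)^n\varphi_{s,t}^{n}\varphi_{s,t}^{\prime n}$, dividing numerator and denominator of each summand by $\varphi_{s,t}^{2n}$ and setting $q=\varphi_{s,t}^{\prime}/\varphi_{s,t}$ (the same $q$ that appears throughout the paper in the $q$-Pochhammer symbols, consistent with $\brk[c]{n}_{s,t}=\varphi_{s,t}^{n-1}(1-q^n)/(1-q)$) reduces the summand to
\[
\frac{t^n}{\brk[c]{2n}_{s,t}}=\sqrt{s^2+4t}\,\frac{(-q)^n}{1-q^{2n}}.
\]
Thus the whole statement reduces to evaluating $\sum_{n\ge 1}(-q)^n/(1-q^{2n})$ in closed Lambert form.

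The key algebraic step is the elementary partial-fraction identity
\[
\frac{r^n}{1-r^{2n}}=\frac{r^n}{1-r^n}-\frac{r^{2n}}{1-r^{2n}},
\]
which I would check by clearing denominators, the right-hand numerator being $r^n(1-r^{2n})-r^{2n}(1-r^n)=r^n(1-r^n)$. Applying this with $r=-q$ and using $(-q)^{2n}=q^{2n}$ gives $(-q)^n/(1-q^{2n})=(-q)^n/(1-(-q)^n)-q^{2n}/(1-q^{2n})$; summing over $n$ turns the two pieces into $\LL(-q)$ and $\LL(q^2)$ respectively, so $\sum_{n\ge 1}(-q)^n/(1-q^{2n})=\LL(-q)-\LL(q^2)$.

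It then remains to translate the arguments back into $s,t$. From $\varphi_{s,t}^{\prime}=q\varphi_{s,t}$ and $\varphi_{s,t}^{2}=-t/q$ one obtains $\varphi_{s,t}^{\prime 2}=-qt$, whence $\varphi_{s,t}^{\prime 2}/t=-q$ and $\varphi_{s,t}^{\prime 4}/t^2=q^2$; substituting identifies $\LL(-q)-\LL(q^2)$ with $\LL(\varphi_{s,t}^{\prime 2}/t)-\LL(\varphi_{s,t}^{\prime 4}/t^2)$, and the $\sqrt{s^2+4t}$ factor pulled out at the start yields the claim. I do not expect a real obstacle in the algebra; the only point that genuinely requires care is convergence. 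One must ensure $|q|<1$ (equivalently that $\varphi_{s,t}$ is the dominant root), which is exactly the regime isolated in the convergence discussion preceding the statement; under it both Lambert series converge absolutely, legitimizing the term-by-term splitting and the rearrangement of the double sum.
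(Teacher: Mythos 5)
Your proposal is correct and takes essentially the same route as the paper: the paper's entire proof is the termwise identity $\frac{t^n}{\brk[c]{2n}_{s,t}}=\sqrt{s^2+4t}\left(\frac{(\varphi_{s,t}^{\prime2}/t)^n}{1-(\varphi_{s,t}^{\prime2}/t)^n}-\frac{(\varphi_{s,t}^{\prime4}/t^2)^n}{1-(\varphi_{s,t}^{\prime4}/t^2)^n}\right)$ followed by summation over $n\geq1$, which is exactly the decomposition you derive via Binet's formula, $q=\varphi_{s,t}^{\prime}/\varphi_{s,t}$, and your partial-fraction step with $r=-q=\varphi_{s,t}^{\prime2}/t$. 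Your explicit verification of that identity and your convergence remark ($\vert q\vert<1$, so both Lambert series converge absolutely) merely spell out what the paper leaves unstated.
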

\begin{proof}
Take into account that
\begin{equation*}
    \frac{t^n}{\brk[c]{2n}_{s,t}}=\sqrt{s^2+4t}\left(\frac{(\varphi_{s,t}^{\prime2}/t)^n}{1-(\varphi_{s,t}^{\prime2}/t)^n}-\frac{(\varphi_{s,t}^{\prime4}/t^2)^n}{1-(\varphi_{s,t}^{\prime4}/t^2)^n}\right)
\end{equation*}
and sum for all $n\geq1$.
\end{proof}

\begin{theorem}
    \begin{equation}
        \sum_{n=1}^{\infty}\frac{1}{\brk[c]{2n-1}_{s,1}}=-\frac{\sqrt{s^2+4}}{4}\theta_{2}(\varphi_{s,1}^{\prime2})^2,
    \end{equation}
where $\theta_{2}(q)=\sum_{n=-\infty}^{\infty}q^{(n+1/2)^2}$.
\end{theorem}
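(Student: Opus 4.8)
The plan is to collapse the series to a single-variable Lambert-type sum in $q=\varphi_{s,1}^{\prime2}$ and then recognize the result as a classical Jacobi theta identity. First I would invoke the Binet representation $\brk[c]{m}_{s,1}=(\varphi_{s,1}^{m}-\varphi_{s,1}^{\prime m})/(\varphi_{s,1}-\varphi_{s,1}^{\prime})$ together with the relation $\varphi_{s,1}\varphi_{s,1}^{\prime}=-1$ (this is the $t=1$ case of $\varphi\varphi'=-t$), so that $\varphi_{s,1}^{\prime}=-\varphi_{s,1}^{-1}$ and $\varphi_{s,1}-\varphi_{s,1}^{\prime}=\sqrt{s^2+4}$. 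Because $2n-1$ is odd, $\varphi_{s,1}^{\prime(2n-1)}=-\varphi_{s,1}^{-(2n-1)}$, so the odd-index terms turn a difference into a sum:
\[
\frac{1}{\brk[c]{2n-1}_{s,1}}=\frac{\sqrt{s^2+4}}{\varphi_{s,1}^{2n-1}+\varphi_{s,1}^{-(2n-1)}}.
\]
It is precisely this sum-instead-of-difference, forced by $\varphi_{s,1}^{\prime}<0$, that later links the series to $\theta_2$.

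Second, I would set $q=\varphi_{s,1}^{\prime2}=\varphi_{s,1}^{-2}\in(0,1)$, so $\varphi_{s,1}^{2n-1}=q^{-(n-1/2)}$, and each summand becomes $q^{\,n-1/2}/(1+q^{2n-1})$. Pulling out $\sqrt{s^2+4}$ reduces the theorem to the single evaluation
\[
\sum_{n=1}^{\infty}\frac{q^{\,n-1/2}}{1+q^{2n-1}}=\frac14\,\theta_2(q)^2 .
\]

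Third — and this is the heart of the matter — I would establish this Lambert-series form of $\theta_2^2$. From the definition one has $\theta_2(q)=2q^{1/4}\sum_{k\geq0}q^{k(k+1)}$, hence $\theta_2(q)^2=4q^{1/2}\big(\sum_{k\geq0}q^{k(k+1)}\big)^2$; after cancelling the $q^{1/2}$ against the $q^{-1/2}$ absorbed into the reindexed series, it remains to prove the pure power-series identity
\[
\Big(\sum_{k\geq0}q^{k(k+1)}\Big)^2=\sum_{m\geq0}\frac{q^{m}}{1+q^{2m+1}} .
\]
This is the classical statement that the number of ordered representations of $m$ as a sum of two triangular numbers equals $d_{1,4}(4m+1)-d_{3,4}(4m+1)$, the signed count of divisors of $4m+1$ modulo $4$; with $\psi(q)=\sum_{k\geq0}q^{k(k+1)/2}$ it is Gauss's evaluation of $\psi(q^2)^2$, provable from the Jacobi triple product. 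I expect this theta identity to be the main obstacle: the Binet collapse and the substitution are routine, but matching the two-triangular-number generating function with the Lambert series genuinely requires the triple-product machinery (or an appeal to the classical literature on $\psi(q)^2$).

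Finally, combining the three steps yields $\sum_{n\geq1}\brk[c]{2n-1}_{s,1}^{-1}=\tfrac14\sqrt{s^2+4}\,\theta_2(\varphi_{s,1}^{\prime2})^2$. I note that with the stated definition of $\theta_2$ the right-hand side is manifestly positive, as is the left-hand side, so the sign in the displayed statement should be read as $+$; tracking this sign correctly through the Binet step (where the minus in $\varphi_{s,1}^{\prime2}$ versus $-\varphi_{s,1}^{\prime}$ can easily be misplaced) is the one place where care is essential.
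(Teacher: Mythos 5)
Your proposal is correct and, for its first two steps, follows the same skeleton as the paper: Binet's formula together with $\varphi_{s,1}\varphi_{s,1}^{\prime}=-1$ collapses $1/\brk[c]{2n-1}_{s,1}$ to $\sqrt{s^2+4}\,q^{n-1/2}/(1+q^{2n-1})$ with $q=\varphi_{s,1}^{\prime2}$, and the theorem then reduces to recognizing a Lambert series for $\theta_2(q)^2$. The genuine divergence is in how that recognition is handled. The paper folds the unilateral sum into a bilateral one over $a\in\Z$, using the invariance of the summand under $a\mapsto 1-a$, and then simply \emph{asserts} that $\sum_{a\in\Z}2q^{a-1/2}/(1+q^{2a-1})=\theta_2(q)^2$ with no proof or citation; you instead stay unilateral and reduce to the identity $\bigl(\sum_{k\geq0}q^{k(k+1)}\bigr)^2=\sum_{m\geq0}q^{m}/(1+q^{2m+1})$, which you correctly identify as Gauss's evaluation of $\psi(q^2)^2$, equivalently the signed divisor count $d_{1,4}(4m+1)-d_{3,4}(4m+1)$ for ordered sums of two triangular numbers, provable by the triple product. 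The two formulations are equivalent (the paper's bilateralization is exactly the symmetry that doubles your one-sided sum), so neither route avoids the classical theta input; your version is the more transparent one because it names the precise classical lemma the paper leaves implicit. Your sign remark is also correct and worth stressing: the paper's first displayed line acquires a spurious minus by writing $\varphi_{s,1}^{\prime\,2n-1}=(\varphi_{s,1}^{\prime2})^{n-1/2}$ even though $\varphi_{s,1}^{\prime}<0$, a branch abuse; with the standard positive root both sides are manifestly positive, so the theorem should read $+\frac{\sqrt{s^2+4}}{4}\theta_{2}(\varphi_{s,1}^{\prime2})^2$, in agreement with Landau's classical evaluation $\sum_{n\geq1}1/F_{2n-1}=\frac{\sqrt{5}}{4}\,\theta_{2}\bigl(\tfrac{3-\sqrt{5}}{2}\bigr)^2$ recovered at $s=1$.
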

\begin{proof}
\begin{align*}
    \sum_{n=1}^{\infty}\frac{1}{\brk[c]{2n-1}_{s,1}}&=-\frac{\sqrt{s^2+4}}{2}\sum_{n=1}^{\infty}\frac{2(\varphi_{s,1}^{\prime2})^{n-1/2}}{1+(\varphi_{s,1}^{\prime2})^{2n-1}}\\
    &=-\frac{\sqrt{s^2+4}}{4}\sum_{a=-\infty}^{\infty}\frac{2(\varphi_{s,1}^{\prime2})^{a-1/2}}{1+(\varphi_{s,1}^{\prime2})^{2a-1}}\\
    &=-\frac{\sqrt{s^2+4}}{4}\theta_{2}(\varphi_{s,1}^{\prime2})^2.
\end{align*}    
\end{proof}

\begin{theorem}
    \begin{equation}\label{eqn_tri_reci}
        \sum_{n=1}^{\infty}\frac{1}{\brk[c]{n}_{s,t}\brk[c]{n+1}_{s,t}}=\varphi_{s,t}-(1+t)\ln\left(1+\frac{\varphi_{s,t}}{t}\right).
    \end{equation}
where
\begin{equation*}
    \ln_{s,t}(1-x)=-\sum_{n=1}^{\infty}\frac{x^n}{\brk[c]{n}_{s,t}}
\end{equation*}
is an $(s,t)$-analog of function logarithm.
\end{theorem}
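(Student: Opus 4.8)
The plan is to split each summand by partial fractions so that the series collapses onto the defining series of $\ln_{s,t}$. First I would record the one-line Binet identity
\begin{equation*}
\brk[c]{n+1}_{s,t}-\varphi_{s,t}\brk[c]{n}_{s,t}=\varphi_{s,t}^{\prime n},
\end{equation*}
which is immediate from $\brk[c]{n}_{s,t}=(\varphi_{s,t}^{n}-\varphi_{s,t}^{\prime n})/(\varphi_{s,t}-\varphi_{s,t}^{\prime})$. Multiplying it by $\varphi_{s,t}^{\prime}/\big(\varphi_{s,t}^{\prime(n+1)}\brk[c]{n}_{s,t}\brk[c]{n+1}_{s,t}\big)$ and using $\varphi_{s,t}\varphi_{s,t}^{\prime}=-t$ produces the key decomposition
\begin{equation*}
\frac{1}{\brk[c]{n}_{s,t}\brk[c]{n+1}_{s,t}}=\frac{1}{\varphi_{s,t}^{\prime n}\brk[c]{n}_{s,t}}+\frac{t}{\varphi_{s,t}^{\prime(n+1)}\brk[c]{n+1}_{s,t}}.
\end{equation*}
Setting $u_n=1/\big(\varphi_{s,t}^{\prime n}\brk[c]{n}_{s,t}\big)$, this is simply $1/(\brk[c]{n}_{s,t}\brk[c]{n+1}_{s,t})=u_n+t\,u_{n+1}$.

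Next I would sum over $n\geq1$. Reindexing gives $\sum_{n\geq1}(u_n+t\,u_{n+1})=(1+t)\sum_{n\geq1}u_n-t\,u_1$ (equivalently, writing $u_n+t\,u_{n+1}=(1+t)u_n-t(u_n-u_{n+1})$ telescopes the last piece to $t\,u_1$). Since $\brk[c]{1}_{s,t}=1$ and $1/\varphi_{s,t}^{\prime}=-\varphi_{s,t}/t$, we have $u_1=1/\varphi_{s,t}^{\prime}=-\varphi_{s,t}/t$, hence $-t\,u_1=\varphi_{s,t}$. The same relation $1/\varphi_{s,t}^{\prime}=-\varphi_{s,t}/t$ identifies the remaining series with the $(s,t)$-logarithm,
\begin{equation*}
\sum_{n\geq1}u_n=\sum_{n\geq1}\frac{(1/\varphi_{s,t}^{\prime})^{n}}{\brk[c]{n}_{s,t}}=-\ln_{s,t}\!\left(1-\frac{1}{\varphi_{s,t}^{\prime}}\right)=-\ln_{s,t}\!\left(1+\frac{\varphi_{s,t}}{t}\right),
\end{equation*}
so that $\sum_{n\geq1}1/(\brk[c]{n}_{s,t}\brk[c]{n+1}_{s,t})=\varphi_{s,t}-(1+t)\ln_{s,t}(1+\varphi_{s,t}/t)$, which is the stated identity (the logarithm in the statement being $\ln_{s,t}$).

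The Binet verification of the decomposition is routine; the real obstacle is justifying the rearrangement, i.e. the convergence of $\sum u_n$ on its own. From $\brk[c]{n}_{s,t}\sim\varphi_{s,t}^{n}/(\varphi_{s,t}-\varphi_{s,t}^{\prime})$ one gets $u_n\sim(\varphi_{s,t}-\varphi_{s,t}^{\prime})(-1/t)^{n}$, so $\sum u_n$ converges absolutely exactly when $|t|>1$ --- precisely the range in which the defining series of $\ln_{s,t}(1+\varphi_{s,t}/t)$ converges, and there the reindexing is fully legitimate. At the boundary $|t|=1$ (the Fibonacci and Pell specializations) the auxiliary series $\sum u_n$ and $\sum u_{n+1}$ converge only in the Abel sense, so one must read the right-hand side through the closed (Lambert-type) form of $\ln_{s,t}$, or as the limit from within the region of convergence; making this boundary passage rigorous is the step that needs the most care.
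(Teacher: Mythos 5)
Your proof is correct and takes essentially the same route as the paper's: the identical partial-fraction decomposition $\frac{1}{\brk[c]{n}_{s,t}\brk[c]{n+1}_{s,t}}=u_n+t\,u_{n+1}$ with $u_n=(-\varphi_{s,t}/t)^n/\brk[c]{n}_{s,t}=1/(\varphi_{s,t}^{\prime n}\brk[c]{n}_{s,t})$, the same regrouping into $(1+t)\sum_{n\geq1}u_n-t\,u_1$, and the same identification of the remaining series with $\ln_{s,t}(1+\varphi_{s,t}/t)$. Your closing caveat is in fact more careful than the paper itself, whose proof only checks convergence of the left-hand series and silently evaluates the $\ln_{s,t}$ series at the boundary of its disc of convergence when $|t|=1$ (the Fibonacci and Pell specializations).
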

\begin{proof}
Suppose that $0<\vert q\vert<1$, $q=\varphi_{s,t}^\prime/\varphi_{s,t}$, and set $a_{n}=1/\brk[c]{n}_{s,t}\brk[c]{n+1}_{s,t}$. As 
\begin{equation*}
    \lim_{n\rightarrow\infty}\frac{a_{n+1}}{a_{n}}=\lim_{n\rightarrow\infty}\frac{\brk[c]{n}_{s,t}}{\brk[c]{n+2}_{s,t}}=\lim_{n\rightarrow\infty}\frac{1-q^n}{\varphi_{s,t}^2-\varphi_{s,t}^{\prime2}q^n}=\frac{1}{\varphi_{s,t}^{2}},
\end{equation*}
then the series in the left-hand of Eq.(\ref{eqn_tri_reci}) is convergent only if $\vert\varphi_{s,t}\vert>1$. If $\vert q\vert>1$, then the series in the left-hand of Eq.(\ref{eqn_tri_reci}) is convergent only if $\vert\varphi_{s,t}^\prime\vert>1$.
By partial fractions
    \begin{align*}
        \frac{1}{\brk[c]{n}_{s,t}\brk[c]{n+1}_{s,t}}=\frac{A}{\brk[c]{n}_{s,t}}+\frac{B}{\brk[c]{n+1}_{s,t}}=\frac{A}{\brk[c]{n}_{s,t}}+\frac{B}{\varphi_{s,t}\brk[c]{n}_{s,t}+\varphi_{s,t}^{\prime n}}
    \end{align*}
where $A=(-\varphi_{s,t}/t)^n$ and $B=(-\varphi_{s,t}/t)^{n+1}$. Then
\begin{align*}
    \sum_{n=1}^{\infty}\frac{1}{\brk[c]{n}_{s,t}\brk[c]{n+1}_{s,t}}&=\sum_{n=1}^{\infty}\left(\frac{(-\varphi_{s,t}/t)^{n}}{\brk[c]{n}_{s,t}}+\frac{t(-\varphi_{s,t}/t)^{n+1}}{\brk[c]{n+1}_{s,t}}\right)\\
    &=-\frac{\varphi_{s,t}}{t}+(1+t)\sum_{n=2}^{\infty}\frac{(-\varphi_{s,t}/t)^{n}}{\brk[c]{n}_{s,t}}\\
    &=-\frac{\varphi_{s,t}}{t}-(1+t)\left(\ln_{s,t}\left(1+\frac{\varphi_{s,t}}{t}\right)-\frac{\varphi_{s,t}}{t}\right)\\
    &=\varphi_{s,t}-(1+t)\ln\left(1+\frac{\varphi_{s,t}}{t}\right).
\end{align*}
\end{proof}
The function $\ln(1-x)$ is convergent for all $x\in(-\vert\varphi_{s,t}\vert,\vert\varphi_{s,t}\vert)$.
Some specializations:
\begin{align*}
    \sum_{n=1}^{\infty}\frac{1}{\F_{n}\F_{n+1}}&=\frac{1+\sqrt{5}}{2}-2\ln_{\F}\left(\frac{3+\sqrt{5}}{2}\right).\\
    \sum_{n=1}^{\infty}\frac{1}{\P_{n}\P_{n+1}}&=1+\sqrt{2}-2\ln_{\P}\left(2+\sqrt{2}\right).\\
    \sum_{n=1}^{\infty}\frac{1}{\J_{n}\J_{n+1}}&=2-3\ln_{\J}\left(2\right).\\
    \sum_{n=1}^{\infty}\frac{1}{\M_{n}\M_{n+1}}&=2+\ln_{\M}\left(0\right).
\end{align*}
A very important conclusion is that $\ln_{M}(0)$ is finite.

\begin{corollary}
    \begin{equation}
        \sum_{n=1}^{\infty}\frac{(-t)^{n}}{\brk[c]{n}_{s,t}\brk[c]{n+1}_{s,t}}=\frac{s+\sqrt{s^2+4t}}{2}.
    \end{equation}
\end{corollary}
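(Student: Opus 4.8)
The plan is to turn the series into a telescoping sum, reusing the partial–fraction decomposition from the previous theorem. There we wrote
\[
\frac{1}{\brk[c]{n}_{s,t}\brk[c]{n+1}_{s,t}}=\frac{(-\varphi_{s,t}/t)^n}{\brk[c]{n}_{s,t}}+\frac{t(-\varphi_{s,t}/t)^{n+1}}{\brk[c]{n+1}_{s,t}}.
\]
First I would multiply this identity through by $(-t)^n$ and use the defining relation $\varphi_{s,t}\varphi_{s,t}^{\prime}=-t$; the two awkward powers then collapse, the first summand becoming $\varphi_{s,t}^{n}/\brk[c]{n}_{s,t}$ and the second $-\varphi_{s,t}^{n+1}/\brk[c]{n+1}_{s,t}$, so that
\[
\frac{(-t)^n}{\brk[c]{n}_{s,t}\brk[c]{n+1}_{s,t}}=\frac{\varphi_{s,t}^{n}}{\brk[c]{n}_{s,t}}-\frac{\varphi_{s,t}^{n+1}}{\brk[c]{n+1}_{s,t}}.
\]
The cleanest self-contained verification is via the Binet form $\brk[c]{n}_{s,t}=(\varphi_{s,t}^{n}-\varphi_{s,t}^{\prime n})/(\varphi_{s,t}-\varphi_{s,t}^{\prime})$, from which $\brk[c]{n+1}_{s,t}-\varphi_{s,t}\brk[c]{n}_{s,t}=\varphi_{s,t}^{\prime n}$ drops out immediately; multiplying by $\varphi_{s,t}^{n}$ produces exactly the numerator $(\varphi_{s,t}\varphi_{s,t}^{\prime})^{n}=(-t)^{n}$.

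Second, I would telescope. Setting $b_n=\varphi_{s,t}^{n}/\brk[c]{n}_{s,t}$ and using $\brk[c]{1}_{s,t}=1$, the partial sums are
\[
S_N=\sum_{n=1}^{N}(b_n-b_{n+1})=b_1-b_{N+1}=\varphi_{s,t}-\frac{\varphi_{s,t}^{N+1}}{\brk[c]{N+1}_{s,t}}.
\]
At this point the entire problem has been reduced to evaluating the single boundary term $\lim_{N\to\infty}\varphi_{s,t}^{N+1}/\brk[c]{N+1}_{s,t}$, and everything else is bookkeeping.

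The main obstacle is precisely this boundary term, and it is where the convergence hypotheses recorded for the previous theorem must be invoked. Writing $q=\varphi_{s,t}^{\prime}/\varphi_{s,t}$, Binet gives $\varphi_{s,t}^{N+1}/\brk[c]{N+1}_{s,t}=(\varphi_{s,t}-\varphi_{s,t}^{\prime})/(1-q^{N+1})$. In the regime in which the series converges with $\varphi_{s,t}^{\prime}$ the dominant root (that is, $|q|>1$), one has $q^{N+1}\to\infty$, the boundary term vanishes, and $S_N\to\varphi_{s,t}=\tfrac12\bigl(s+\sqrt{s^2+4t}\bigr)$, as asserted. I would flag as the delicate point the careful tracking of which of $\varphi_{s,t},\varphi_{s,t}^{\prime}$ governs the tail of $\brk[c]{N+1}_{s,t}$: in the complementary regime $|q|<1$ the identical telescoping instead yields $b_{N+1}\to\varphi_{s,t}-\varphi_{s,t}^{\prime}$, so that the sum collapses to $\varphi_{s,t}^{\prime}$. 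Thus pinning down the exact convergence hypotheses under which the boundary term drops out — rather than the algebra of the telescoping itself — is the crux of a fully rigorous argument.
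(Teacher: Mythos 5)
Your proof is correct, and it takes a genuinely different route from the paper, which obtains the corollary by setting $t=-1$ in the preceding theorem (so that the factor $1+t$ annihilates the logarithmic term) and then rescaling back to general parameters via $\brk[c]{n}_{s,-1}=(i/\sqrt{t})^{n-1}\brk[c]{n}_{a,t}$ with $s=ia/\sqrt{t}$. Your direct telescoping is more elementary and, crucially, more trustworthy: the boundary term you isolate is not merely the delicate point of your argument, it is the point where the paper's own statement fails. In the regime $\vert q\vert<1$, i.e.\ $\vert\varphi_{s,t}\vert>\vert\varphi_{s,t}^{\prime}\vert$ --- which holds in every specialization the paper lists after this corollary, since all of them have $s>0$ --- your computation gives
\begin{equation*}
\sum_{n=1}^{\infty}\frac{(-t)^{n}}{\brk[c]{n}_{s,t}\brk[c]{n+1}_{s,t}}=\varphi_{s,t}-\left(\varphi_{s,t}-\varphi_{s,t}^{\prime}\right)=\varphi_{s,t}^{\prime}=\frac{s-\sqrt{s^{2}+4t}}{2},
\end{equation*}
and this is the correct value: classically $\sum_{n\geq1}(-1)^{n+1}/(F_{n}F_{n+1})=\varphi-1$, so $\sum_{n\geq1}(-1)^{n}/(F_{n}F_{n+1})=1-\varphi=\varphi_{1,1}^{\prime}\approx-0.618$, not $(1+\sqrt{5})/2$ as the paper asserts; likewise $2^{n}/(M_{n}M_{n+1})=1/(2^{n}-1)-1/(2^{n+1}-1)$ telescopes exactly to $1=\varphi_{3,-2}^{\prime}$, not $2$. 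The identity as printed holds only in your complementary regime $\vert q\vert>1$, which for real parameters forces $s<0$ (e.g.\ $s=-1$, $t=1$, where the sum is indeed $(-1+\sqrt{5})/2$). Your decomposition also pinpoints where the paper's own proof breaks: at $t=-1$ the partial-fraction argument of the preceding theorem has $x=-\varphi_{s,t}/t=\varphi_{s,-1}$, at which the two separated series $\sum_{n}x^{n}/\brk[c]{n}_{s,-1}$ diverge (their terms tend to $\varphi_{s,-1}-\varphi_{s,-1}^{\prime}\neq0$), so the cancellation $(1+t)\cdot(\text{divergent series})=0$ is an illegitimate $0\cdot\infty$, and your telescoping identifies the neglected contribution as exactly the nonvanishing boundary limit $\varphi_{s,t}-\varphi_{s,t}^{\prime}$. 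So your argument is complete once the regime hypothesis is stated explicitly, and in the standard regime it corrects, rather than reproduces, the corollary and its listed specializations.
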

\begin{proof}
If $t=-1$ in the above theorem, then
\begin{equation}
    \sum_{n=1}^{\infty}\frac{1}{\brk[c]{n}_{s,-1}\brk[c]{n+1}_{s,-1}}=\varphi_{s,-1}.
\end{equation}
Note that $\brk[c]{n}_{s,-1}=(i/\sqrt{t})^{n-1}\brk[c]{n}_{a,t}$, where $s=ia/\sqrt{t}$. Then
\begin{align*}
    \sum_{n=1}^{\infty}\frac{1}{\brk[c]{n}_{s,-1}\brk[c]{n+1}_{s,-1}}&=
    \sum_{n=1}^{\infty}\frac{1}{(i/\sqrt{t})^{2n-1}\brk[c]{n}_{a,t}\brk[c]{n+1}_{a,t}}\\
    &=\frac{i}{\sqrt{t}}\sum_{n=1}^{\infty}\frac{(-t)^n}{\brk[c]{n}_{a,t}\brk[c]{n+1}_{a,t}}=\frac{i}{\sqrt{t}}\varphi_{a,t}.
\end{align*}
The proof is completed.
\end{proof}
Some specialization:
\begin{align*}
    \sum_{n=1}^{\infty}\frac{(-1)^{n}}{F_{n}F_{n+1}}&=\frac{1+\sqrt{5}}{2}.\\
    \sum_{n=1}^{\infty}\frac{(-1)^{n}}{P_{n}P_{n+1}}&=1+\sqrt{2}.\\
    \sum_{n=1}^{\infty}\frac{(-2)^{n}}{J_{n}J_{n+1}}&=2.\\
    \sum_{n=1}^{\infty}\frac{2^{n}}{M_{n}M_{n+1}}&=2.\\
    \sum_{n=1}^{\infty}\frac{1}{U_{n-1}(t)U_{n}(t)}&=t+\sqrt{t^2-1}.
\end{align*}


\begin{thebibliography}{99}

\bibitem{Sa1} B. Sagan and C. Savage, Combinatorial interpretations of
binomial coefficient analogs related to Lucas sequences,  {\it
Integers}, {\bf 10} (2010), A52, 697--703.

\bibitem{bennet} C. Bennet, J. Carrillo, J. Machacek, and B. Sagan, Combinatorial Interpretations of Lucas Analogues of Binomial Coefficients and Catalan Numbers, {\it Annal of Combinatorics}, {\bf 24} (2020), 503--530.

\bibitem{deza}
{E. Deza, M.M. Deza, }
{Figurate Numbers, 	World Scientific (2012)}

\bibitem{landau}
{E. Landau, }
Sur la série des inverse de nombres de Fibonacci,
Bull. Soc. Math. France \textbf{27} (1899), 298--300. 

\bibitem{gasper}
G. Gasper, M. Rahman,
Basic Hypergeometric Series, $2^{nd}$ ed., 
Cambridge University Press, Cambridge, MA, 1990.

\bibitem{kus}
{Kus S., Tuglu N. and Kim T.,}
{Bernoulli F-polynomials and Fibo-Bernoulli matrices, Adv Differ Equ, \textbf{145}(2019)}

\bibitem{ruiz}
{J. Ruiz, }
A Lucas analogue of Eulerian numbers. Preprint, January 12, 2023. Available at http://arxiv.org/abs/2301.05215.

\bibitem{navas}
{L. Navas, }
Analytic continuation of the Fibonacci Dirichlet series. 
Fibonacci Q., \textbf{39}(5), (2001) 409--418.

\bibitem{sch}
M. J. Schlosser, 
$q$-Analogues of the Sums of Consecutive Integers, Squares, Cubes, Quarts and Quints,
Electron. J. Comb. \textbf{11}(1) (2004) \#R71.

\bibitem{egami}
{S. Egami, }
Some curious Dirichlet series. 
RIMS Kokyuroku, \textbf{1091} (1999), 172--174. 

\bibitem{ekhad}
{S. Ekhad, }
The Sagan-Savage Lucas-Catalan polynomials have positive coefficients. Preprint, January 21, 2011. Available at
http://arxiv.org/abs/1101.4060.

\bibitem{war}
S. O. Warnaar,
On the $q$-analogue of the sum of cubes,
Electron. J. Comb. \textbf{11}(1) (2004) \#N13.

\bibitem{sloane}
N. J. A. Sloane et al., The On-Line Encyclopedia of Integer Sequences, 2021. Available at \url{https://oeis.org/}

\end{thebibliography}
\end{document}